\newtheorem{theorem}{Theorem}[section]
\newtheorem{lemma}[theorem]{Lemma}
\newtheorem{corollary}[theorem]{Corollary}
\theoremstyle{definition}
\theoremstyle{remark}
\renewcommand\qedsymbol{$\blacksquare$}
\begin{document}

\title{Optimal Geodesic Curvature Constrained Dubins' Paths on a Sphere}

% \footnote{Approved for public release: distribution unlimited, case number: AFRL-2021-3105.}

% \author{Swaroop Darbha$^1$ \and Athindra Pavan$^1$ \and K. R. Rajagopal$^1$ \and Sivakumar Rathinam$^1$ \and David Casbeer$^2$ \and Satyanarayana Manyam$^3$}

\author{Swaroop Darbha$^1$, Athindra Pavan$^1$, K. R. Rajagopal$^1$, Sivakumar Rathinam$^1$,\\
David W. Casbeer$^2$, 
Satyanarayana G. Manyam$^3$ \\
}

%The correct dates will be entered by the editor.
\footnotetext[1]{Texas A \& M University, College Station, TX, 77843, USA}
\footnotetext[2]{Air Force Research Laboratories, WPAFB, OH, 45431 USA}
\footnotetext[3]{Infoscitex Corp., Dayton, OH, 45431, USA}

% \institute{Swaroop Darbha, dswaroop@tamu.edu
%               \and
%         Athindra Pavan, athindra\_pavan@tamu.edu
%               \and
%         K. R. Rajagopal, krajagopal@tamu.edu 
%         \and
%         Sivakumar Rathinam, srathinam@tamu.edu
%         \and 
%         David Casbeer, david.casbeer@afrl.af.mil
%         \and 
%         Satyanarayana Manyam, Corresponding Author, msngupta@gmail.com
% }
\date{}
\maketitle

\begin{abstract}
    In this article, we consider the motion planning of a rigid object on the unit sphere with a unit speed. The motion of the object is constrained by the maximum absolute value, $U_{max}$ of geodesic curvature of its path; this constrains the object to change the heading at the fastest rate only when traveling on a tight smaller circular arc of radius $r <1$, where $r$ depends on the bound, $U_{max}$. We show in this article that if $0<r \le \frac{1}{2}$, the shortest path between any two configurations of the rigid body on the sphere consists of a concatenation of at most three circular arcs. Specifically, if $C$ is the smaller circular arc and $G$ is the great circular arc, then the optimal path can only be $CCC, CGC, CC, CG, GC, C$ or $G$. If $r> \frac{1}{2}$, while paths of the above type may cease to exist depending on the boundary conditions and the value of $r$, optimal paths may be concatenations of more than three circular arcs.
\end{abstract}

% \keywords{Dubins Paths \and Sections \and Formulas \and More}

%All acknowledgements should be placed in the back of the paper after Conclusions..

\section{Introduction}
This paper is motivated by two applications - one involving a rigid object of negligible inertia moving at a  uniform speed on a spherical surface; the second is that of a under-actuated rigid body that is spinning about one of its principal axes at a constant speed, while it can be maneuvered with limited effort about another principal axis. In both applications, the primary objective is to change the configuration of the rigid body from an initial configuration to a desired configuration in the shortest possible time/distance. The planar counterpart to the first application is that of a Dubins vehicle\cite{Dub}, where the motion is constrained by a bound on its yaw rate, and consequently, is limited by a minimum turning radius when changing direction. 

Finding the appropriate counterpart of Dubins' planar model to the 3-d case has been a challenge. Sussman \cite{SussmanHJ95} considered 3-d Dubin's problem with a curvature constraint and showed that the optimal trajectory is either a helicoidal arc or a concatenation of at most three segments (like an optimal 2-d Dubins path). For surfaces of non-negative curvature, \cite{Chitour2005DubinsPO} provides conditions on the surface for a connecting path between two configurations to exist while satisfying curvature constraints. It, however, does not deal with optimality of the path in terms of length.  In \cite{Bakolas}, a numerical technique is presented to generate optimal trajectories on a sphere in the presence of wind, however, this does not consider the geodesic curvature constraints. When the motion is constrained to the surface of a sphere, a natural generalization can be provided through the geodesic curvature constraint on the motion, where the geodesic curvature constraint is the counterpart of the planar curvature constraint on the motion of Dubins vehicle. Essentially, if the geodesic curvature is zero, the traversed path is a great circular arc ($G$) on the sphere - a geodesic. In the planar counterpart, the minimum turning radius constraint can be thought of as the corresponding geodesic curvature constraint; clearly, if the curvature is zero, the Dubins vehicle will traverse a geodesic (straight line segment). If one were to approximate a sphere as a polyhedron with sufficiently large number of facets, the Dubins path on the polyhedron can only be a small circular arc ($C$) or a straight line segment. Using the tangent plane approximation, it is not inconceivable that the optimal/shortest motion constrained path will then be a concatenation of small circular arcs and great circular arcs. Indeed, this result was shown using Pontryagin's minimum principle in \cite{MP}; more specifically, the author of \cite{MP} shows that the planar Dubins result generalizes to the spherical counterpart {\it for the specific value of smaller circular arc of radius $\frac{1}{\sqrt{2}}$}. 

The main contribution of the current article is to show that the planar Dubins result generalizes to the sphere only if the smaller circular arc can be of any radius in the region: $(0, \frac{1}{2}] \cup \{\frac{1}{\sqrt{2}}\}$; there are numerical counterexamples of $CCC$ paths not being optimal if the radius is greater than $\frac{1}{2}$ and not equal to $\frac{1}{\sqrt{2}}$.  Proof of the main result follows the treatment in \cite{Boissonat}.

% {\color{red} Write a paragraph about the organization of the paper}
The paper is organized as follows: In section 2, we provide a model for studying the motion of a Dubins vehicle on a spherical surface. In section 3, we provide the main result of the paper, and in section 4, we provide illustrative computational results that show how this problem differs from its counterpart on a plane. An appendix contains all the calculation details that are not covered in section 3. 
\section{Mathematical Formulation}
\subsection{Model}
Consider the trajectory of a rigid massless robot (modeled as a short pointed stick) on a unit sphere as shown by the green spherical curve in Fig. \ref{fig:coordsys}. 

Let $s$ denote arc length (traversed by the robot), and $\mathbf{X}(s)$ be a $C^1$-smooth {\it spherical} curve, representing the position of the robot (geometric center of the stick) on the sphere. Let ${\mathbf T}(s) := \frac{d {\mathbf X}}{ds}$ represent the direction of motion of the robot traveling with unit speed.\footnote{There is no loss in generality in considering a point traveling with unit speed on a unit sphere; one can scale distance and time appropriately to arrive at this conclusion. 
%See Appendix \ref{app:normaljustification} for justification.
} Then, ${\mathbf T}(s)$ is a unit vector as $ds^2 = <d{\mathbf X}, d{\mathbf X}>$. Define the cross product ${\mathbf N}(s) = {\mathbf X}(s) \wedge {\mathbf T}(s)$, and the geodesic curvature, $u_g(s) := <\frac{d {\mathbf T}}{ds}, {\mathbf N}(s)>$. 
%Noting that $<X(s), T'(s)> + \underbrace{<T(s), T(s)>}_{1} = 0$, we get 
% Then,
% $$T'(s) = <T'(s), X(s)> X(s) + <T'(s), N(s)> N(s) =  - X(s) + u(s) N(s).$$ Similarly,
% $$N'(s) = \underbrace{X'(s) \wedge T(s)}_{=0} + X(s) \wedge T'(s) = -u(s) T(s). $$
% Putting everythnig together,
It is easy to derive that \cite{Taskopru15_SabbanOnS2}
\begin{eqnarray*}
\frac{d {\mathbf X}(s)}{ds} = {\mathbf T}(s), \quad
\frac{d {\mathbf T}(s)}{ds} = -{\mathbf X}(s) + u_g(s) {\mathbf N}(s), \quad
\frac{d {\mathbf N}(s)}{ds} = -u_g(s){\mathbf T}(s).
\end{eqnarray*}

\begin{figure}
\centering
\begin{minipage}{.45\textwidth}
  \centering
  \includegraphics[width=.9\linewidth]{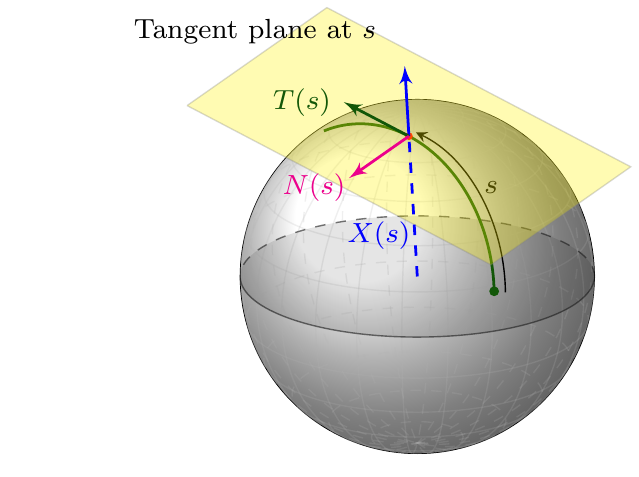}
  \captionof{figure}{Dubins model for motion on a sphere}
  \label{fig:coordsys}
\end{minipage}% 
\hspace{.25cm}
\begin{minipage}{.45\textwidth}
  \centering
  \includegraphics[width=.8\linewidth]{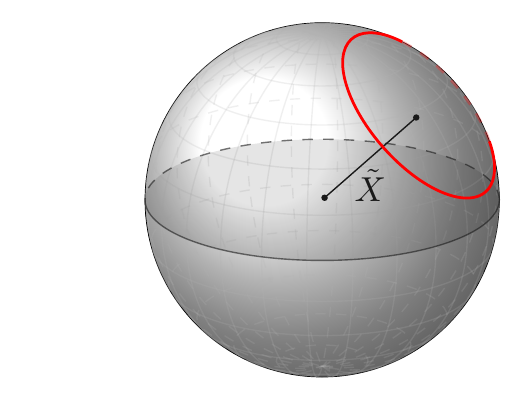}
  \captionof{figure}{Case: Constant $u_g(s) \equiv u_g \ne 0$}
  \label{fig:test2}
\end{minipage}
\end{figure}

The second derivative of $T(s)$ is 
\begin{equation} 
    \frac{d^2{\mathbf T}(s)}{ds^2} = -(1+u_g^2(s)){\mathbf T}(s)
    \label{eq:periodicT}
\end{equation}
When $u_g(s) = U$ is constant, the solution of the second order ODE in equation \eqref{eq:periodicT} is periodic with angular frequency $\omega = \sqrt{1 + U^2}$. If $U \neq 0$, the period is $\frac{2 \pi}{\sqrt{1+U^2}}$, assuming unit speed, this corresponds to a small circular arc of radius $r = \frac{1}{\sqrt{1+U^2}}$.  Furthermore, define $\tilde{\mathbf X}(s) := U {\mathbf X}(s) + {\mathbf N}(s)$. $\tilde X(s)$ lies on the radial vector that passes through the center of the small circle, as shown in the Fig. \ref{fig:test2}. Note that $\frac{d\tilde X(s)}{ds} = U \frac{dX(s)}{ds} + \frac{dN(s)}{ds} = 0$, and thus $\tilde X(s)$ is constant. If $U = 0$, the solution corresponds to a great circular arc.

\subsection{Shortest path problem formulation}

% It is convenient to represent the configuration of the robot with a rotation matrix ${\mathcal R}(s) \in SO(3)$ with columns given by the vectors ${\mathbf X}(s), {\mathbf T}(s), {\mathbf N}(s)$.

It is convenient to specify a rotation matrix ${\mathcal R}(s) \in SO(3)$ to represent the configuration of the robot; the columns of ${\mathcal R}(s)$ are respectively ${\mathbf X}(s), {\mathbf T}(s)$ and $ {\mathbf N}(s)$.

Suppose the initial configuration of the robot is $I_3$, the $3 \times 3$ identity matrix; suppose the desired final configuration is given by $R_f \in SO(3)$. The problem of determining the path of shortest length connecting the two configurations can be expressed as the following variational problem:
\begin{eqnarray}
J &=& \min \int_{0}^L 1 \;ds
\label{eq:optproblem}
\end{eqnarray}
subject to 
\begin{eqnarray}
\frac{d{\mathbf X}}{ds} = {\mathbf T}(s), \quad \quad \frac{d {\mathbf T}}{ds} = - {\mathbf X}(s) + u_g(s) {\mathbf N}(s), \quad \quad \frac{d {\mathbf N}}{ds} = - u_g(s){\mathbf T}(s), \end{eqnarray}
and the boundary conditions
\begin{eqnarray}
{\mathcal R}(0) = I_3, \quad \quad {\mathcal R}(L) = R_f.
\label{eq:optconstraints}
\end{eqnarray} 
The term $u_g(s)$ (or simply, $u_g$) is the scalar control input and represents the geodesic curvature at $s$; it is a measure of how the path is differing from the geodesic (great circle here). We assume that $u_g(s)$ is bounded by $U_{max}$, i.e., $|u_g(s)| \le U_{max}$.

The connection to underactuated spinning rigid bodies is very clear. The governing equations correspond to kinematic equations for a rigid body spinning about its $z$ axis by $1 \; rad/s$ and about its $x$ axis by $u_g(s)\; rad/sec$, which is the control effort; clearly, $|u_g(s)|\le U_{max}$ implies limited control effort can be applied to change the configuration of the body, and the overall problem is to effect a change in configuration in this case in the shortest time. 
\section{Main Results}
Let $r = \frac{1}{\sqrt{1+U_{max}^2}}$; let $C$ denote a circular arc of radius $r$ and $G$ denote a greater circular arc. We denote concatenation of arcs in the order specified by the sequence of letters; for example, a path of type $CGC$ involves concatenation of three arcs - the first arc is a smaller circular arc of radius $r$, the second one is a great circular arc and the third one is a smaller circular arc. Please note that the end points of the circular arc segment must be compatible at the points of concatenation; \textit{i.e.}, the configuration of the body at the end of the previous circular arc must be identical to the configuration at the beginning of the following circular arc. For example, in a $CGC$ path,  the configuration of the body at the end of the first smaller circular arc must be identical to the configuration at the beginning of the greater circular arc; similarly, the configuration of the body at the end of the greater circular arc must be identical to the configuration at the beginning of the second circular arc. 

The main result of this paper is as follows:
\begin{theorem}
If $0 < r \le \frac{1}{2}$, the optimal path can only be of one of the six types: CGC, CCC, CG, GC, CC, G, or C.
\label{maintheorem}
\end{theorem}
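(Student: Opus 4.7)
The plan is to follow the approach of Boissonnat--Cerezo--Leblond adapted to the sphere, using Pontryagin's Minimum Principle (PMP) on the Sabban frame kinematics of the previous section.

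First I would set up the PMP on $SO(3)$. The state is the rotation matrix $\mathcal{R}(s)$ whose columns are $\mathbf{X}(s), \mathbf{T}(s), \mathbf{N}(s)$, the control is $u_g(s) \in [-U_{max}, U_{max}]$, and the cost is $\int 1\, ds$. Writing the Hamiltonian with a costate trajectory and noting that the Hamiltonian is affine in $u_g$, the switching function is linear in $u_g$, so on any extremal either $u_g \equiv \pm U_{max}$ (a $C$ arc) or the switching function vanishes on an interval. A short computation using the adjoint equations and the Goh/Kelley condition for the singular arc should show that the only admissible singular control is $u_g \equiv 0$, giving a $G$ arc. Hence every extremal is a finite concatenation of $C$ and $G$ pieces, and the task reduces to bounding the number and pattern of arcs.

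Next I would collect standard restrictions on intermediate arcs: an interior $G$ arc between two $C$ arcs cannot exceed length $\pi$, and an interior $C$ arc sandwiched between other arcs must have length in a restricted interval (roughly, bounded below by the analogue of $\pi$ for the planar case, adjusted by the spherical radius). Using these, I would rule out patterns containing $GCG$ or $GG$, by showing that any candidate $GCG$ segment can be shortened to a $CGC$ or $CCC$ segment connecting the same two configurations. Rotation composition in $SO(3)$ is most conveniently handled via unit quaternions or via the invariants $\tilde{\mathbf{X}}(s) = U\mathbf{X}(s) + \mathbf{N}(s)$ identified in the formulation, which gives a clean description of each $C$ arc as a rotation about a fixed axis.

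The core difficulty is the combinatorial step: showing that no optimal path contains four or more arcs. I would assume a hypothetical subpath of the form $C_1 C_2 C_3 C_4$ with alternating signs of $u_g$ and carry out a length-preserving variation that perturbs the three interior switching times as a two-parameter family preserving the boundary configurations. Expressing the composition of the four rotations about four axes (each axis lying on the sphere at geodesic distance related to $r$ from the corresponding $\mathbf{X}$) and linearizing yields a system whose solvability amounts to a geometric inequality involving $r$. Here the hypothesis $r \le \tfrac{1}{2}$, i.e.\ $U_{max} \ge \sqrt{3}$, is what makes the variation produce a strictly shorter path, because the turning-axis loci of consecutive $C$ arcs are then well separated; for $r > \tfrac{1}{2}$ these loci can intersect and the variational argument fails, which is consistent with the counterexamples advertised in the introduction. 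This is the main obstacle and will require the bulk of the appendix calculations.

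Finally, once no more than three arcs are allowed and $GCG$ and $GG$ are excluded, a small case enumeration on sequences of length $\le 3$ over the alphabet $\{C, G\}$ leaves exactly the seven admissible types $CGC$, $CCC$, $CG$, $GC$, $CC$, $G$, $C$, completing the proof of Theorem~\ref{maintheorem}.
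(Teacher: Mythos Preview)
Your overall architecture matches the paper's: PMP gives bang--bang/singular controls, hence $C$ and $G$ arcs, and the remaining work is ruling out four-arc paths. However, there is a concrete combinatorial gap in your elimination step. Ruling out $GG$ and $GCG$ is not enough to exclude all four-arc patterns other than $CCCC$: for example $CCCG$, $CGCC$, $GCCC$, $GCCG$ contain neither $GG$ nor $GCG$ as a subpath. The paper handles this by also proving that $GCC$ (and by reflection $CCG$) is non-optimal; together with $GCG$, this kills all seven mixed four-arc patterns and leaves only $CCCC$. Moreover, the paper's argument for $GCG$ and $GCC$ is not a shortening construction as you suggest, but an adjoint-equation argument: one shows $\|\psi(s)\|=1$ throughout any path containing a $G$ arc, so at every inflexion point $\psi=(0,0,-1)$, and then the rotation $e^{\hat\Omega\phi}$ on the middle $C$ arc would have to fix a vector which is not its axial vector, forcing $\phi=0$.

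Your sketch of the $CCCC$ step is where the proposal is weakest. The paper's argument is a specific one-parameter regular perturbation, not a two-parameter switching-time variation: first, adjoint analysis forces the two middle arcs of any optimal $CCCC$ to have equal angle $\pi+\phi$ with $\phi\in(0,\pi)$; then one takes the subpath $L_{\alpha}R_{\pi+\phi}L_{\pi+\phi}$ with $0<\alpha\ll 1$ and constructs a competing $R_{\pi+\phi+\xi(\alpha)}L_{\pi+\phi+\eta(\alpha)}R_{\beta(\alpha)}$ path with the same endpoints. Expanding the matrix equation $R_L(\alpha)R_R(\pi+\phi)R_L(\pi+\phi)=R_R(\cdots)R_L(\cdots)R_R(\cdots)$ in Taylor series in $\alpha$ and contracting against the axial vectors ${\bf u}_L,{\bf u}_R$ yields $a_1+a_2+a_3=1$ at first order and $(b_1+b_2+b_3)\sin\phi = 4a_1(1+\cos\phi)(1-r^2)$ at second order, with $a_1 = -(1-2r^2(1+\cos\phi))$. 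The hypothesis $r\le\tfrac{1}{2}$ enters precisely here, as the algebraic condition $1-2r^2(1+\cos\phi)>0$ making $a_1<0$ and hence $b_1+b_2+b_3<0$; it is not a geometric ``separation of turning-axis loci'' as you describe. Without this specific second-order computation, your outline does not explain why $r\le\tfrac12$ is the threshold.
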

This is a generalization of characterization of optimal Dubins paths for planar systems to Dubins paths on a sphere. The main tool used is Pontryagin's minimum principle, and the proof of Theorem 3.1 uses the following summarized intermediate results:
\begin{itemize}
    \item Using Pontryagin's minimum principle, we show that the control actions are piece-wise constant and the constant can be one of the three values, i.e.,  $u_g(s) \in \{U_{max}, 0, -U_{max}\}$. This is given in Lemma \ref{lem:optcontrol}.
    \item Each of the piece-wise constant control actions results in a smaller circular arc ($C$) of radius $r$ if $u_g(s) \ne 0$ or a great circular arc of radius $1$ when $u_g(s) = 0$. This is shown in Lemma \ref{lem:arcpath}. 
    Together Lemmas \ref{lem:optcontrol} and \ref{lem:arcpath} imply that the optimal path is a concatenation of smaller circular arcs and great circular arcs.
    \item Non-optimality of a non-trivial concatenation of four circular arcs rules out the possibility of four or more circular arcs in the optimal path.  The following logic yields this result.
    \begin{itemize}
        \item If we remove the possibility of two great circular arcs in succession, since that is equivalent to one circular arc, then the eight possibilities remain: (1) $CCCC$, (2) $CCCG$, (3) $CCGC$, (4) $CGCC$, (5) $GCCC$, (6) $CGCG$, (7) $GCCG$, (8) $GCGC$.
        %\item If two great circular arcs are concatenated in succession, compatibility at the point of concatenation implies that it cannot be a point of inflexion and hence, degenerates to a single great circular arc. Non-triviality imposes that two great circular arcs therefore be non-contiguous in the concatenation. This restricts the possible concatenations with four circular arcs for further investigation to one of the following eight possibilities -- (1) CCCC, (2) CCCG, (3) CCGC, (4) CGCC, (5) GCCC, (6) CGCG, (7) GCCG, (8) GCGC.
        \item  Showing the non-optimality of paths of type $GCG$ and $GCC$ (and $CCG$ by symmetry) rules out the optimality of seven path possibilities except the $CCCC$ path. Hence, in Lemma \ref{lem:GCG_GCC}, we show the non-optimality of $GCG$ and $GCC$ paths and use Lemmas \ref{lem:inflexion}--\ref{lem:skew} for preparing the necessary background material for this lemma.
        \item Finally, Theorem \ref{th:CCCC} shows that a $CCCC$ path is not optimal if $0< r \le \frac{1}{2}$; the restriction on $r$ appears only in this part of the proof. Monroy-Perez \cite{MP} showed that the same result holds if $r = \frac{1}{\sqrt{2}}$; consequently if $r \notin \{1,\frac{1}{\sqrt{2}}\}, r > \frac{1}{2}$, it is possible that $CCCC$ path is optimal.
        %; our conjecture is that for any integer $p \ge 3$, there is a $r^*(p)<1$ such that for $r \notin \{1, \frac{1}{\sqrt{2}}\}$ and $0< r \le r^*(p)$, then the optimal path is either a concatenation of at most $p$ small circular arcs or CGC or subpaths of them. 
    \end{itemize}
\end{itemize}

\subsection{Application of Pontryagin's minimum principle}
To apply Pontryagin's minimum principle, define the Hamiltonian through the dual/adjoint variables $\lambda_1(s),$ $\lambda_2(s),$ $\lambda_3(s)$ as:
\begin{eqnarray*}
%\nonumber
H(s; \lambda_1, \lambda_2, \lambda_3) &=& 1 + <\lambda_1, {\mathbf T}> +
<\lambda_2, -{\mathbf X} + u_g {\mathbf N}> + <\lambda_3, - u_g {\mathbf T}>\\ 
&=&1+<\lambda_1, {\mathbf T}> -<\lambda_2, {\mathbf X}> + u\{<\lambda_2, {\mathbf N}> - <\lambda_3, {\mathbf T}>\}.
\end{eqnarray*}
Define:
\begin{eqnarray}
A &:=& <\lambda_2, {\mathbf N}> - <\lambda_3, {\mathbf T}>, \\
B&:=& <\lambda_3, {\mathbf X}> - <\lambda_1, {\mathbf N}>, \\
C &:=& <\lambda_1, {\mathbf T}> - <\lambda_2, {\mathbf X}>.
\end{eqnarray}
Accordingly, 
\begin{eqnarray}
\label{eqn:adjoint}
\frac{dA}{ds} = B, \quad \quad \frac{dB}{ds} = - A + u_g C, \quad \quad \frac{dC}{ds} = - u_gB.  
\end{eqnarray}
and 
\begin{eqnarray}
H = 1 + C + u_g A.
\end{eqnarray}

The following lemma establishes the bang-bang nature of the optimal control action:
\begin{lemma}
    The optimal control action, $u_g(s)$, that solves \eqref{eq:optproblem}-\eqref{eq:optconstraints} has the following properties:
    \begin{enumerate}
        \item[(i)] 
        \begin{eqnarray}
            u_g(s) = \begin{cases}
            -U_{max}, \quad \quad A(s)>0, \\
            U_{max}, \quad  \quad A(s) <0,
            \end{cases}
        \end{eqnarray}
        \item[(ii)] $$A(s) \equiv 0, \quad \forall s \in [a,b) \implies u_g(s) \equiv 0, \quad \forall s \in [a,b). $$
    \end{enumerate}
    \label{lem:optcontrol}
\end{lemma}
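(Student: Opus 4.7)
The plan is to apply Pontryagin's minimum principle directly to the Hamiltonian $H(s;\lambda_1,\lambda_2,\lambda_3) = 1 + C(s) + u_g(s)\, A(s)$, exploiting the fact that the control $u_g$ enters linearly, multiplying $A(s)$. For part (i), the pointwise minimization condition over the admissible set $|u_g| \le U_{max}$ reduces to minimizing the scalar expression $u_g \cdot A(s)$, whose minimizer on $[-U_{max},U_{max}]$ is immediate: $u_g = -U_{max}$ when $A(s)>0$ and $u_g = +U_{max}$ when $A(s)<0$. This yields the bang-bang structure away from the switching surface $\{A=0\}$.

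Part (ii) is the singular-arc analysis and requires a little more work. The plan is to differentiate the identity $A(s)\equiv 0$ on $[a,b)$ using the adjoint dynamics from \eqref{eqn:adjoint}. A first differentiation gives $B(s) = \tfrac{dA}{ds} \equiv 0$ on the same interval. Differentiating $B \equiv 0$ and using $\tfrac{dB}{ds} = -A + u_g C$ together with $A\equiv 0$ yields the key relation
\begin{equation*}
u_g(s)\,C(s) \equiv 0, \qquad s \in [a,b).
\end{equation*}

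To conclude that $u_g \equiv 0$, I would invoke the standard transversality consequence of PMP for the free-terminal-length problem \eqref{eq:optproblem}: since the final arc length $L$ is not prescribed, the Hamiltonian is identically zero along the optimal trajectory, $H(s) \equiv 0$. Substituting $A \equiv 0$ into $H = 1 + C + u_g A$ forces $C(s) \equiv -1$ on $[a,b)$, which in particular is nonzero. Combining this with $u_g C \equiv 0$ gives $u_g(s) \equiv 0$, as required. As a consistency check, note that the identity $\tfrac{dC}{ds} = -u_g B$ is automatically satisfied (both sides vanish on $[a,b)$), so the singular arc is self-consistent.

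The main obstacle in this argument is not the algebraic manipulation but the justification that $H \equiv 0$ along the optimum, which hinges on the free terminal condition on $L$; once this is in place, the rest of the singular-arc analysis is a short chain of differentiations of the adjoint system. An additional subtlety worth noting is that the conclusion $u_g \equiv 0$ on the open interval $[a,b)$ is all that is needed, since measurability of the control and continuity of $A, B, C$ let us extend the classification almost everywhere and match endpoints with the bang values in part (i) at concatenation points.
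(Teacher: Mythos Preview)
Your proposal is correct and follows essentially the same approach as the paper: pointwise minimization of the linear-in-$u_g$ Hamiltonian for part (i), and for part (ii) the same chain of differentiating the adjoint equations ($A\equiv 0 \Rightarrow B\equiv 0$, then $\tfrac{dB}{ds}=-A+u_gC=0$) combined with the free-terminal-time condition $H\equiv 0$ to force $C\equiv -1$ and hence $u_g\equiv 0$. The only cosmetic difference is that the paper first shows $C$ is constant via $\tfrac{dC}{ds}=-u_gB=0$ before invoking $H\equiv 0$, whereas you read off $C\equiv -1$ directly from $H=1+C+u_gA\equiv 0$ with $A\equiv 0$; both routes are equivalent.
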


\begin{proof}
\begin{enumerate}
\item[(i)]

From Pontryagin's minimum principle, $u$ minimizes $H$ pointwise; hence,
\begin{eqnarray}
u_g(s) = \begin{cases}
-U_{max}, \quad \quad A(s)>0, \\
U_{max}, \quad  \quad A(s) <0,
\end{cases}
\end{eqnarray}
and is undetermined if $A=0$. 
\item[(ii)] 
\begin{eqnarray*}
A(s) \equiv 0 \implies \frac{dA}{ds} \equiv 0 \implies B(s) \equiv 0 \implies \frac{dC}{ds} \equiv 0.
\end{eqnarray*}
This implies that $C(s) = C_0$, but by Pontryagin's Minimum Principle, $H \equiv 0 \implies C_0 = -1.$
Since $B(s) \equiv 0$, we have 
\begin{eqnarray*}
\frac{dB}{ds} = 0 \implies - A(s)+ u_g(s) C(s) = 0 \implies u_g(s) C_0 \equiv 0 \implies u_g(s) \equiv 0.
\end{eqnarray*}
\end{enumerate}
\end{proof}
Note that $u_g(s) \equiv 0$ if and only if $A(s) \equiv 0$; otherwise, $u_g(s)$ will take a value of either $-U_{max}$ or $U_{max}$. We have seen earlier in section 2.1 that $u_g(s) \equiv 0$ implies the corresponding part of the path is a great circular arc and if $u_g(s) = \pm U_{max}$, the corresponding part of the path is a small circular arc, i.e., arc of radius $r$.

\begin{lemma}
\begin{itemize}
    \item[(i)] If for all $s \in [a,b)$, $u_g(s) \equiv 0$, then the corresponding part of the path is an arc of the great circle.
    \item[(ii)] If for all $s \in [a,b)$, $|u_g(s)| = U_{max}$, then the corresponding part of the path is a small circular arc of radius $r = \frac{1}{\sqrt{1+U_{max}^2}}$. 
\end{itemize}
\label{lem:arcpath}
\end{lemma}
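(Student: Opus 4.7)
The plan is to read off both statements directly from the ODEs in Section 2.1, exploiting the conserved quantities that are already exhibited there. No new machinery is needed beyond solving a linear ODE and identifying the loci.

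For part (i), I would specialize equation \eqref{eq:periodicT} to $u_g \equiv 0$, so that $\frac{d^2 \mathbf{T}}{ds^2} = -\mathbf{T}$ on $[a,b)$. Using the initial data $\mathbf{T}(a)$ and $\mathbf{T}'(a) = -\mathbf{X}(a) + u_g(a)\mathbf{N}(a) = -\mathbf{X}(a)$, the explicit solution is
\begin{equation*}
\mathbf{T}(s) = \cos(s-a)\,\mathbf{T}(a) - \sin(s-a)\,\mathbf{X}(a).
\end{equation*}
Integrating and using $\mathbf{X}(a)$ as the initial value yields
\begin{equation*}
\mathbf{X}(s) = \cos(s-a)\,\mathbf{X}(a) + \sin(s-a)\,\mathbf{T}(a).
\end{equation*}
Thus $\mathbf{X}(s)$ lies in the two-dimensional subspace $\mathrm{span}\{\mathbf{X}(a),\mathbf{T}(a)\}$; since $\mathbf{X}(a)\perp \mathbf{T}(a)$ and both are unit vectors, the intersection of this plane (through the origin) with the unit sphere is a great circle, which $\mathbf{X}(s)$ traces at unit speed.

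For part (ii), I would use the invariant highlighted in Section 2.1. Setting $U = u_g(s)$ constant on $[a,b)$ with $|U| = U_{max}$, define $\tilde{\mathbf{X}}(s) := U \mathbf{X}(s) + \mathbf{N}(s)$. From the Frenet-type equations, $\frac{d\tilde{\mathbf{X}}}{ds} = U\mathbf{T} - u_g\mathbf{T} = 0$, so $\tilde{\mathbf{X}}$ is a constant vector on $[a,b)$. Since $\mathbf{X}\perp\mathbf{N}$ and both are unit, $\|\tilde{\mathbf{X}}\|^2 = U^2+1 = 1+U_{max}^2$ and $\langle \mathbf{X}(s),\tilde{\mathbf{X}}\rangle = U$. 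Hence $\mathbf{X}(s)$ lies in the affine plane
\begin{equation*}
\Pi = \{\mathbf{y}\in\mathbb{R}^3 : \langle \mathbf{y},\tilde{\mathbf{X}}\rangle = U\},
\end{equation*}
whose signed distance from the origin is $U/\sqrt{1+U_{max}^2}$. Intersecting $\Pi$ with the unit sphere gives a circle whose radius is
\begin{equation*}
\sqrt{1 - \frac{U^2}{1+U_{max}^2}} = \frac{1}{\sqrt{1+U_{max}^2}} = r,
\end{equation*}
independent of the sign of $U$. This identifies the path as a small circular arc of radius $r$; unit-speed traversal on a compact circle shows it is indeed a connected arc of that circle over $[a,b)$.

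There is no serious obstacle here: the argument is essentially a clean repackaging of the ODE and the conserved vector $\tilde{\mathbf{X}}$ already introduced in Section 2.1. The only point requiring a moment of care is checking that the two cases $u_g = \pm U_{max}$ yield the same radius (they do, since the radius depends only on $U^2$), and that the constant vector $\tilde{\mathbf{X}}$ is genuinely nonzero so that $\Pi$ is a well-defined plane, which follows from $\|\tilde{\mathbf{X}}\|^2 = 1+U_{max}^2 > 0$.
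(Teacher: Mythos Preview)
Your proof is correct and rests on the same key ingredients as the paper's: for (i), solving the linear ODE with $u_g=0$ explicitly; for (ii), exploiting the conserved vector $\tilde{\mathbf X}=U\mathbf X+\mathbf N$. The packaging differs slightly in part (ii). The paper, after noting $\tilde{\mathbf X}$ is constant, introduces $\tilde{\mathbf N}=\tilde{\mathbf X}\times\mathbf T$, derives the second-order ODE $\tilde{\mathbf N}''=-(1+U^2)\tilde{\mathbf N}$, and reads off the radius from the resulting period $2\pi/\sqrt{1+U^2}$. You instead argue purely geometrically: $\langle \mathbf X,\tilde{\mathbf X}\rangle=U$ places $\mathbf X(s)$ on a fixed affine plane at distance $|U|/\sqrt{1+U^2}$ from the origin, whose intersection with the unit sphere is a circle of radius $1/\sqrt{1+U_{max}^2}$. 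Your route is marginally more direct, avoiding a second ODE solve, while the paper's route additionally exhibits the explicit time-parametrization of the arc; both are entirely elementary and equivalent in substance.
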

\begin{proof} 
See the Appendix \ref{sec:arcpath_proof}. 
\renewcommand{\qedsymbol}{}
\end{proof}
\noindent Note that $u_g(s)$ is the geodesic curvature of the path at $s$; hence, $u(s) \equiv 0$ corresponds to a great circular arc.

\begin{corollary}
From Lemmas \ref{lem:optcontrol} and \ref{lem:arcpath}, it follows that the optimal path can only consist of great circular arcs or small circular arcs.
\label{corollary}
\end{corollary}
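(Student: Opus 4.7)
The proof is essentially a bookkeeping exercise that combines the two preceding lemmas, together with a mild argument ruling out pathological switching. My plan is to decompose the arc-length interval $[0,L]$ into finitely many subintervals on each of which $u_g$ is constant, and then apply Lemma \ref{lem:arcpath} to each piece.

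First I would partition $[0,L]$ using the switching function $A(s)$. Since $(A,B,C)$ satisfies the linear adjoint system \eqref{eqn:adjoint} with bounded measurable $u_g$, each of $A,B,C$ is absolutely continuous. Let $\mathcal{O}=\{s\in[0,L] : A(s)\neq 0\}$, an open set, and $\mathcal{Z}=[0,L]\setminus\mathcal{O}$. On each connected component of $\mathcal{O}$, Lemma \ref{lem:optcontrol}(i) forces $u_g\equiv U_{max}$ or $u_g\equiv -U_{max}$, and Lemma \ref{lem:arcpath}(ii) then identifies this piece as a small circular arc $C$ of radius $r$. On the interior of $\mathcal{Z}$, Lemma \ref{lem:optcontrol}(ii) gives $u_g\equiv 0$ and Lemma \ref{lem:arcpath}(i) identifies the piece as a great circular arc $G$.

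The step that requires some care is ruling out chattering, i.e., accumulation points of sign changes of $A$ which would leave boundary points of $\mathcal{Z}$ unaccounted for. I would handle this by noting that on any maximal subinterval on which $u_g$ is constantly equal to $U\in\{-U_{max},0,U_{max}\}$, the relations $B=A'$ and $C'=-UA'$ imply $C=-UA+\gamma$ for some constant $\gamma$, hence $A$ satisfies the scalar ODE
\begin{equation*}
A''(s)+(1+U^2)A(s)=U\gamma,
\end{equation*}
whose nontrivial solutions are sinusoids about a constant offset and therefore have only isolated zeros. Consequently, on the compact interval $[0,L]$, there are only finitely many sign changes of $A$, and finitely many switching points separating the alternating pieces described above.

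The corollary then follows at once: the optimal trajectory is a finite concatenation of arcs, each of which is either a small circular arc of radius $r$ or a great circular arc. I expect the only conceptual obstacle to be the chattering argument; the remaining content is a direct appeal to Lemmas \ref{lem:optcontrol} and \ref{lem:arcpath}.
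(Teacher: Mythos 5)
The paper gives no proof of this corollary at all: it is stated as an immediate consequence of Lemmas~\ref{lem:optcontrol} and~\ref{lem:arcpath}, with the (implicit) understanding that a piecewise-constant bang-bang control produces a concatenation of $C$- and $G$-arcs. Your proposal is therefore strictly more careful than the paper, and you are right to flag chattering as the one non-trivial issue: a finite concatenation does not follow from the two lemmas alone, and the finiteness is tacitly used later in the argument that rules out four-arc paths. Your observation that on any constant-control stretch $A$ solves $A''+(1+U^2)A=U\gamma$ is correct and is the right tool.

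The one place your argument is not yet airtight is the inference from ``on each maximal constant-control interval the nontrivial solutions have isolated zeros'' to ``finitely many sign changes of $A$ on $[0,L]$.'' That step presupposes that there are finitely many such maximal intervals, which is exactly what chattering would violate; isolated zeros per interval do not by themselves forbid the intervals from accumulating. To close the gap you need a \emph{uniform} lower bound on the spacing of consecutive switching points, and the ingredients for it are already at hand: at any switch $s_i$ one has $A(s_i)=0$ and $C(s_i)=-1$ (by the same continuity-of-$H$ argument used at the start of the proof of Lemma~\ref{lem:inflexion}), which fixes $\gamma = C+UA = -1$ on the ensuing interval. Writing $A(s)=-Ur^2(1-\cos\phi)+B(s_i)\,r\sin\phi$ with $\phi=(s-s_i)/r$, the Pontryagin sign condition $u_g(s)A(s)\le 0$ forces $B(s_i)$ to have the sign opposite to $U$, and the requirement $A(s_{i+1})=0$ with $A$ of one sign in between then forces $\phi(s_{i+1})\ge\pi$, i.e.\ $s_{i+1}-s_i\ge\pi r$ whenever $u_g\neq 0$ on the interval. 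Since two consecutive $G$-arcs can be merged, this gives at most finitely many switches on the compact interval $[0,L]$. With that supplement your proof is complete; without it, the ``consequently'' is a leap.
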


\subsection{Non-optimality of $GCG$ \& $GCC$ paths}
From Corollary \ref{corollary}, the optimal trajectory is a concatenation of great and small circular arcs.

In this subsection, we will focus on showing non-optimality of $GCC$ and $GCG$ paths; we will show the non-optimality of $CCCC$ path in the next subsection and complete the proof of Theorem \ref{maintheorem}. To begin with, we will need to characterize the length of the arc between inflection points of the optimal path, and the following preliminaries are required for that task. 

Let $\psi(s)$ denote the vector with components $A(s), B(s)$ and $C(s)$, then the adjoint equation \eqref{eqn:adjoint} can be compactly expressed as
$$ \underbrace{\begin{pmatrix} \frac{dA}{ds} \\ \frac{dB}{ds} \\ \frac{dC}{ds} \end{pmatrix}}_{\frac{d\psi}{ds}} = \underbrace{\begin{pmatrix} 0&1&0\\-1&0&u(s)\\ 0&-u(s)& 0
\end{pmatrix}}_{\Omega(s)}\underbrace{\begin{pmatrix}A(s) \\ B(s) \\ C(s) \end{pmatrix}}_{\psi(s)}. $$
If $|u_g(s)| = U_{max}$, we can express $\Omega(s) = \Omega_i = $ on $[s_i, s_{i+1})$, then
\begin{equation}
    \psi(s) = e^{\Omega_i(s-s_i)}\psi(s_i), \quad \forall s \in [s_i, s_{i+1}).
    \label{eq:psi_sol}
\end{equation}
Corresponding to $u_g = -U_{max}$ and $U_{max}$, define respectively the following matrices
$$\Omega_L = \begin{pmatrix} 0&1&0\\-1&0&-U_{max}\\ 0&U_{max}& 0
\end{pmatrix}, \quad  \Omega_R = \begin{pmatrix} 0&1&0\\-1&0&U_{max}\\ 0&-U_{max}& 0
\end{pmatrix}.
$$
Instead of dealing with arc length, it is simpler to use the arc angle; for this reason, define 
\begin{equation}
\phi(s) := \frac{s-s_i}{r}, \quad k_x = \sqrt{1-r^2}, \quad k_y = 0, \quad k_z = r,
\label{eq:def1}
\end{equation}
and 
\begin{eqnarray}
    \hat \Omega_L &=& r \Omega_L = \begin{pmatrix} 0&k_z&0\\-k_z&0&-k_x\\ 0&k_x& 0
    \end{pmatrix}= \begin{pmatrix}
    0&r&0\\-r&0& -\sqrt{1-r^2} \\ 0 & \sqrt{1-r^2} & 0
    \end{pmatrix}, \label{eq:OhatL}\\ 
    \hat \Omega_R &=& r \Omega_R = \begin{pmatrix} 0&k_z&0\\-k_z&0&k_x\\ 0&-k_x& 0
    \end{pmatrix} =\begin{pmatrix}
    0&r&0\\ -r&0& \sqrt{1-r^2} \\ 0 & -\sqrt{1-r^2} & 0
    \end{pmatrix}. \label{eq:OhatR}
\end{eqnarray}
Hence, for $i \in \{R, L\}$, we can express $e^{\Omega_i(s-s_i)} = e^{{\hat \Omega}_i \phi}$ and employ Euler-Rodriguez formula:
\begin{equation}
    e^{{\hat \Omega}_i \phi} = I + {\hat \Omega}_i \sin \phi + {\hat \Omega}_i^2 (1- \cos \phi). 
    \label{eq:ER}
\end{equation}

The Darboux/axial vector of $\Omega_L$ and $\Omega_R$ are given by
$$ {\mathbf u}_L := \begin{pmatrix} -k_x \\ k_y \\ k_z \end{pmatrix} = \begin{pmatrix} -\sqrt{1-r^2}\\ 0 \\ r \end{pmatrix}, \quad \quad {\mathbf u}_R := \begin{pmatrix} k_x \\ k_y \\ k_z \end{pmatrix} = \begin{pmatrix} \sqrt{1-r^2}\\ 0 \\ r \end{pmatrix}. $$
Note that ${\hat \Omega}_L {\mathbf u}_L = 0$ and ${\hat \Omega}_R {\mathbf u}_R = 0$.

Let us turn our attention to inflection points on the spherical path. Inflection occurs when the control input (geodesic curvature) switches from one value to another. If inflection in the optimal path occurs at $s=s_0$, then $u(s)$ is piecewise constant on $(s_0-\epsilon, s_0)$ and $(s_0, s_0+\epsilon)$ for sufficiently small $\epsilon>0$.

\begin{lemma}
 If inflection occurs at $s=s_i$ on the optimal path, then $C(s_i) = -1$ and $A(s_i) = 0$. Furthermore, if $s_1, s_2$ are consecutive inflexion points corresponding to a CCC path, then $s_2-s_1 > \pi r$, i.e., the middle segment must have a length greater than $\pi r$. 
 \label{lem:inflexion}
\end{lemma}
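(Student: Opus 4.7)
The plan is to dispatch the two conclusions in order. For the first, observe that Lemma \ref{lem:optcontrol} forces $u_g$ to take only the values $\pm U_{max}$ off the set $\{A=0\}$ and to switch sign exactly where $A$ crosses zero. Since $A$ is continuous (indeed $A'=B$ from the adjoint system \eqref{eqn:adjoint}), an inflection at $s=s_i$ requires $A(s_i)=0$. Because the terminal length $L$ in \eqref{eq:optproblem} is free, Pontryagin's principle forces $H\equiv 0$ along the optimal trajectory (the same identity used inside the proof of Lemma \ref{lem:optcontrol}); evaluating $H=1+C+u_g A$ at $s_i$ with $A(s_i)=0$ immediately yields $C(s_i)=-1$.

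For the length bound, I will work on the middle arc $[s_1,s_2]$, on which $u_g\equiv u\in\{\pm U_{max}\}$ is constant. From $C'=-uA'$ together with $C(s_1)=-1$ I obtain $C(s)=-1-u\,A(s)$, and substituting into $A''=B'=-A+uC$ gives the forced harmonic equation
\begin{equation*}
A''+(1+u^{2})A=-u,
\end{equation*}
whose natural frequency is $\omega=\sqrt{1+u^{2}}=1/r$. Solving with initial data $A(s_1)=0$, $A'(s_1)=B(s_1)$ and writing $\theta=(s-s_1)/r$ gives
\begin{equation*}
A(s)=u r^{2}\bigl(\cos\theta-1\bigr)+r\,B(s_1)\sin\theta,
\end{equation*}
which I factor by half angles as
\begin{equation*}
A(s)=2r\sin(\theta/2)\bigl[\,B(s_1)\cos(\theta/2)-u r\sin(\theta/2)\,\bigr].
\end{equation*}

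Thus the positive zeros of $A$ are $\theta=2\pi k$ together with the roots of $\tan(\theta/2)=B(s_1)/(u r)$. Pontryagin's switching rule (Lemma \ref{lem:optcontrol}) forces $A$ and $u$ to have opposite signs on the open interval $(s_1,s_2)$; combined with $A'(s_1)=B(s_1)$, this requires $B(s_1)/u\le 0$, i.e.\ $\tan(\theta^{*}/2)\le 0$, where $\theta^{*}:=(s_2-s_1)/r$ is the first positive zero of $A$. Therefore $\theta^{*}/2\in(\pi/2,\pi]$, giving $\theta^{*}>\pi$ and $s_2-s_1=r\theta^{*}>\pi r$.

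The only real hazard I foresee is sign bookkeeping: the Pontryagin sign rule (which ties the sign of $A$ to $-u$), the initial slope $A'(s_1)=B(s_1)$, and the sign of $\tan(\theta^{*}/2)$ must all be threaded together consistently in both cases $u=\pm U_{max}$. Once that is handled, the factored half-angle form makes the lower bound $\theta^{*}>\pi$ a one-line trigonometric observation, and the degenerate case $B(s_1)=0$ simply collapses to $\theta^{*}=2\pi$, which trivially satisfies the same inequality.
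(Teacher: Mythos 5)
Your proof is correct; the first half matches the paper's argument in substance, and the second half takes a genuinely different route. Both derive the same sinusoidal expression for $A$ on the middle arc, but you obtain it by eliminating $C$ through $C'=-uA'$ and solving the resulting scalar forced oscillator $A''+(1+u^2)A=-u$, while the paper reads it off the Euler--Rodrigues form of the transition matrix $e^{\hat{\Omega}\phi}$. The real divergence is downstream: the paper invokes Rolle's theorem to produce an interior zero $\bar s$ of $B=A'$, substitutes $B(\bar s)=0$ into the switching inequality to obtain $\cos\phi(\bar s)<0$, and then runs a separate midpoint/uniqueness argument (using the non-degeneracy $s_2-s_1<2\pi r$) to identify $\bar s=(s_1+s_2)/2$. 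Your half-angle factorization $A=2r\sin(\theta/2)\bigl[B(s_1)\cos(\theta/2)-ur\sin(\theta/2)\bigr]$ sidesteps both Rolle and the midpoint step: the Pontryagin sign condition at $s_1^{+}$ forces $\tan(\theta^*/2)=B(s_1)/(ur)\le 0$, and a one-line trigonometric observation gives $\theta^*>\pi$. This is shorter, and it absorbs the degenerate $B(s_1)=0$ case (where $\theta^*=2\pi$) without a separate appeal to $s_2-s_1<2\pi r$. The one thing left implicit, and worth one explicit sentence if you write this up, is that $A$ keeps a single sign on the open interval $(s_1,s_2)$, so that $s_2$ really is the \emph{first} positive zero of $A$; this follows from Lemma \ref{lem:optcontrol}, since an interior sign change of $A$ would force a control switch and contradict $s_1,s_2$ being consecutive inflection points.
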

 \begin{proof}
Since $A(s), B(s), C(s)$ are solutions of the ordinary differential equation \eqref{eqn:adjoint}, they are continuous in $s$. From Pontryagin's minimum principle, 
$$H(s) = 1 + C(s) + u_g(s) A(s) \equiv 0. $$
Consider an inflection point, $s_i$. Then, for $\epsilon >0$, let 
$u_g = u^{-}$ on $(s_i-\epsilon, s_i)$ and $u_g =u^{+}$ on $(s_i, s_i+\epsilon)$. Then, by Pontryagin's minimum principle,
\begin{eqnarray}
H(s_i-\epsilon) = 1 + C(s_i-\epsilon) + u^{-} A(s_i-\epsilon) = 0, \\
H(s_i + \epsilon) = 1+ C(s_i + \epsilon) + u^{+} A(s_i + \epsilon) = 0.
\end{eqnarray}
From continuity, $\lim_{\epsilon \rightarrow 0} C(s_i - \epsilon) = \lim_{\epsilon \rightarrow 0}C(s_i + \epsilon), \quad \lim_{\epsilon \rightarrow 0}A(s_i - \epsilon) = \lim_{\epsilon \rightarrow 0}A(s_i+ \epsilon)$. Since $u^{-} \ne u^{+}$, we can conclude that $C(s_i) = -1$, and $A(s_i) = 0.$

For a $CCC$ path, $|u_g(s)| = U_{max}$ for $s \in [s_1, s_2)$; hence, let $u_g = U \in \{-U_{max}, U_{max}\}$ and ${\hat \Omega}$ be defined as in \eqref{eq:OhatL} or \eqref{eq:OhatR}, depending on the value of $U$. From \eqref{eq:psi_sol}, \eqref{eq:def1} and \eqref{eq:ER} we have
\begin{eqnarray*}
\psi(s) &=& e^{{\hat \Omega} \phi} \psi(s_i) = \left(I + {\hat \Omega}\sin \phi + {\hat \Omega}^2 (1-\cos \phi)\right)\psi(s_i), \\
% B'(s) &=& -A(s) + U C(s), \\
% (-A(s) + U C(s))' &=& -B(s) +U(-UB(s))= -(1+U^2)B(s)\\
% \implies 
\implies \begin{pmatrix} A(s) \\ B(s) \\ C(s) \end{pmatrix}&=&\begin{pmatrix}
1-r^2(1-\cos \phi)&r \sin \phi & r^2U(1-\cos\phi) \\ -r \sin \phi & \cos \phi & rU \sin \phi \\ r^2U(1-\cos \phi) &-rU \sin \phi & 1-r^2U^2(1-\cos \phi)
\end{pmatrix} \begin{pmatrix} A(s_1)\\ B(s_1) \\ C(s_1) \end{pmatrix}.
\end{eqnarray*}
% Hence, defining $\phi (s):= {\sqrt{1+U^2}}(s-s_1)$ on $[s_1, s_2)$ and 
Noting that $A(s_1) = 0, \; C(s_1) = -1$, we have:
\begin{eqnarray*} 
B(s) %&=& B(s_1) \cos(\phi(s)) + \frac{-A(s_1) + U C(s_1)}{\sqrt{1+U^2}}\sin(\phi(s)), \\
&=& B(s_1) \cos \phi(s) - rU \sin\phi(s), \\
A(s) &=&  r\left(B(s_1) \sin \phi(s) - rU(1-\cos \phi(s)) \right).
\end{eqnarray*}
From Pontryagin's minimum principle, $UA(s) \le 0$ for $s \in [s_1,s_2)$; hence,
\begin{eqnarray}
\label{eq:Pontr}
Ur[B(s_1) \sin\phi(s)-rU(1-\cos \phi(s))]  \le 0. 
\end{eqnarray}

To prove the second part, we note that $A(s_1) = 0, A(s_2) =0$; by Rolle's Theorem, there is a ${\bar s} \in (s_1, s_2)$ such that $B({\bar s}) = A'({\bar s}) = 0$.  But,
$$B({\bar s}) = 0 \implies B(s_1) \cos \phi({\bar s}) -rU \sin\phi ({\bar s}) = 0.$$
We first note that $\cos\phi(\bar s) \ne 0$; otherwise, since $\sin\phi(\bar s) \ne 0$ simultaneously, it follows that $U = 0$, which is not the case. Hence, solving for $B(s_1)$ using the previous equation and substituting it in \eqref{eq:Pontr}, we obtain:
\begin{eqnarray}
    %\begin{split}
        (Ur)^2 \left[ \tan\phi(\bar s)\sin \phi(\bar s) - (1-\cos\phi(\bar s))\right] \le 0,  
        \end{eqnarray}
or equivalently,
\begin{eqnarray}
\sin\phi(\bar s) \tan \phi(\bar s) -(1-\cos \phi(\bar s)) \le 0 
% \\
% %\sqrt{1+U^2}
% \frac{(1+ \cos \phi(\bar s))}{\cos \phi(\bar s)} - 1 \le 0
\implies \cos {\phi}({\bar s}) \le 0. 
   % \end{split}
\end{eqnarray}
This implies that $\cos \phi(\bar s)<0$ as $\cos\phi(\bar s) \ne 0$. Noting that $A(s)$ is a sinusoid, and $A'(s) = B(s)$, we infer that $B(s) = 0$ corresponds to extrema of $A(s)$; clearly if $\phi(s)$ is restricted to an interval of length $\pi$ (as $\cos \phi({\bar s}) <0$), the solution will be unique. We will show that $\bar s= \frac{1}{2}(s_1+s_2)$ and then arrive at the desired result.

Since $A(s_1) = A(s_2)$, $rU \ne 0$ and $s_2 - s_1 >0$, subtracting $A(s_1)$ from $A(s_2)$ gives
\begin{eqnarray*}
B(s_1)(\sin \phi(s_1) - \sin \phi(s_2)) + rU (\cos \phi(s_1) - \cos \phi(s_2)) = 0, \\
\implies B(s_1) \cos \left(\frac{\phi(s_1) + \phi(s_2)}{2}\right) - rU \sin \left(\frac{\phi(s_1) + \phi(s_2)}{2}\right) = 0. 
\end{eqnarray*}
Since $$\frac{\phi(s_1) + \phi(s_2)}{2} = \phi \left(\frac{s_1+s_2}{2}\right),$$ it follows that 
$$B(s_1) \cos \left(\phi\left(\frac{ s_1 + s_2}{2}\right)\right) - rU \sin \left(\phi\left(\frac{s_1 + s_2}{2}\right)\right) = 0.$$

Note that $0<s_2 -s_1 < 2 \pi r$ as the path would be non-optimal otherwise. {Since $\phi(\frac{s_1+s_2}{2}) =  \frac{s_2-s_1}{2r} \le \pi$, it follows that ${\bar s} = \frac{s_1+s_2}{2}$ as it is the only solution for $B(s) = 0$ in the interval $[s_1, s_2)$.}  {Since $\cos\phi(\bar s) <0$, it implies that $\phi(\bar s)  = \frac{s_2-s_1}{2r} \in (\frac{\pi}{2}, \frac{3 \pi}{2})$;} given that $s_2 -s_1 <2 \pi r$, this implies that 
$ \frac{s_2-s_1}{2r} \in (\frac{\pi}{2},{\pi})$; in other words, $2 \pi r > s_2-s_1 > \pi r$.

\end{proof}
\begin{lemma}
Suppose a great circular arc is part of an optimal path. Then $\|\psi(s)\|^2 = A^2(s)+B^2(s)+C^2(s) = 1$ throughout the path. 
\end{lemma}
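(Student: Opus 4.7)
The plan is to exploit two separate facts: first, that $\|\psi\|^2$ is a conserved quantity along any optimal trajectory regardless of the value of $u_g$; second, that on a great circular arc segment, the conserved value can be pinned down explicitly to $1$. Combining these gives the result.

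\textbf{Step 1 (conservation of $\|\psi\|^2$).} First I would compute the derivative of $\|\psi(s)\|^2 = A^2(s)+B^2(s)+C^2(s)$ directly from the adjoint system \eqref{eqn:adjoint}:
\begin{equation*}
\tfrac{1}{2}\tfrac{d}{ds}\|\psi\|^2 \;=\; A\,\tfrac{dA}{ds} + B\,\tfrac{dB}{ds} + C\,\tfrac{dC}{ds} \;=\; AB + B(-A + u_g C) + C(-u_g B) \;=\; 0.
\end{equation*}
(Equivalently, the matrix $\Omega(s)$ governing $\psi$ is skew-symmetric, so the flow is an isometry.) Hence $\|\psi(s)\|^2$ is constant on the entire optimal path, independent of switches in $u_g$.

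\textbf{Step 2 (evaluation on the $G$ segment).} Next I would evaluate the conserved quantity on the great-circle portion. Let $[a,b) \subset [0,L]$ be an open interval on which $u_g(s) \equiv 0$, corresponding to the great circular arc in the path. By Lemma \ref{lem:optcontrol}(i), $A(s) \neq 0$ forces $u_g(s) = \pm U_{max}$; therefore $u_g \equiv 0$ on $[a,b)$ implies $A(s) \equiv 0$ on that interval. Differentiating, $B(s) = \frac{dA}{ds} \equiv 0$ on $[a,b)$ as well. From the Hamiltonian identity $H \equiv 0$, $1 + C(s) + u_g(s) A(s) \equiv 0$, which with $u_g \equiv 0$ gives $C(s) \equiv -1$ on $[a,b)$. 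Thus for any $s_0 \in [a,b)$,
\begin{equation*}
\|\psi(s_0)\|^2 \;=\; 0^2 + 0^2 + (-1)^2 \;=\; 1.
\end{equation*}

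\textbf{Step 3 (conclusion).} Combining Steps 1 and 2: the conserved quantity $\|\psi(s)\|^2$ attains the value $1$ at some interior point of the $G$ segment, so by conservation it equals $1$ for every $s \in [0,L]$, as claimed. There is no real obstacle here; the only subtlety is justifying that $u_g \equiv 0$ on the $G$ segment forces $A \equiv 0$ (not merely $A = 0$ at isolated switching points), which is handled cleanly by the contrapositive of Lemma \ref{lem:optcontrol}(i).
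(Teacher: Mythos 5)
Your proof is correct and follows essentially the same two-part strategy as the paper's: establish that $\|\psi\|^2$ is conserved (both via the skew-symmetry of $\Omega(s)$, though you differentiate $\|\psi\|^2$ directly while the paper invokes unitarity of $e^{\Omega_i(s-s_i)}$ piecewise plus continuity), then pin the constant to $1$ on the $G$ segment by deducing $A\equiv 0$, $B\equiv 0$, $C\equiv -1$ there. Your Step 2 is in fact slightly cleaner than the paper's, which detours through the inflexion-point lemma to get $C(s_i)=-1$ and $A(s_i)=0$ before re-deriving these facts from $A\equiv 0$ on the segment anyway.
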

\begin{proof}
Let the optimal path have a length $l_T$ and inflection points at $s_1, s_2, \ldots, s_L$. Define $s_0 = 0$ and $s_{L+1} =l_T$. Since the geodesic curvature is piecewise constant, let 
$$ u_g(s) = U_i, \quad \forall s \in [s_i, s_{i+1}), \; \; i=0, 1, \ldots, L.
$$
For each $i=0, \ldots, L$, $\psi(s) = e^{\Omega_i(s-s_i)} \psi(s_i)$ and by virtue of $e^{\Omega_i(s-s_i)}$ being unitary, 
$$\|\psi(s)\| = \|\psi(s_i)\|, \quad \forall s \in [s_i, s_{i+1}).
$$
Continuity of $\psi(s)$ implies continuity of $\|\psi(s)\|$, and hence, 
$\|\psi(s_{i+1})\| = \|\psi(s_i)\|$ implying that $\|\psi(s)\|$ is a constant throughout the path. Therefore, it suffices to show that $\|\psi(s_i)\| =1$ for some inflection point $s_i$. 
 
If a great circular arc is part of an optimal path, then $u_g(s)\equiv 0$ on $[s_i, s_{i+1})$ for some $i$. We also know from Lemma \ref{lem:inflexion} that $C(s_i) = -1$ and $A(s_i) =0$. 
Since $u_g(s) \equiv 0$ on $[s_i, s_{i+1})$ (otherwise, the path cannot be a greater circular arc), we must also have $A(s) \equiv 0$ (otherwise, by Pontryagin's minimum principle, we must have $u_g(s) = \pm U_{max}$, which would not correspond to a great circle). From Equation \eqref{eqn:adjoint}, it follows that $B(s) \equiv 0$ for $s \in [s_i, s_{i+1})$; in particular, $B(s_i) = 0$ and hence, $A^2(s_i) +B^2(s_i) +C^2(s_i) = 1$, implying that $A^2(s) +B^2(s)+C^2(s) \equiv 1$ throughout the path.
%  Note that $u(s) \equiv 0$ on the great circular arc; let $u(s) = 0 \on [a,b)$, where $s=a$ and $s=b$ denote the inflexion points of the path. Since $u(s) \equiv 0 $ on $[a,b)$, it follows that $A(s) \equiv 0 \implies B(s) \equiv 0$ (from \eqref{eqn:adjoint}) and from Pontryagin's maximum principle, $C(s) = -1$ on the corresponding part of this path. Hence, $A^2(s) + B^2(s) + C^2(s) = 1$.  
 %By continuity of $A(s), B(s)$ and $C(s)$ $A^2(s)+B^2(s)+C^2(s)$ remains the same throughout the path; hence, the result follows.
\end{proof}

\begin{lemma}
Suppose ${\hat \Omega}$ is a skew-symmetric matrix with ${\bf u}$ being the unit axial vector of ${\hat \Omega}$. Suppose $R = e^{{\hat \Omega} \phi}$ be a proper rotation. If ${\bf w} \ne 0$ is such that $R {\bf w} = {\bf w}$, then either $\phi=0$ or ${\bf u} = {\bf w}$ (or equivalently, ${\hat \Omega} {\bf w} = {\bf 0}$).
\label{lem:skew}
\end{lemma}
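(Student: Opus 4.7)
The plan is to apply the Euler-Rodrigues formula \eqref{eq:ER} directly to the fixed-point equation and reduce the problem to a pair of orthogonality conditions. Starting from $R\mathbf{w} = \mathbf{w}$, substituting $R = I + \hat\Omega \sin\phi + \hat\Omega^2(1-\cos\phi)$ and cancelling the $I\mathbf{w}$ term gives
\begin{equation*}
\sin\phi \cdot \hat\Omega \mathbf{w} + (1-\cos\phi)\cdot \hat\Omega^2 \mathbf{w} = \mathbf{0}.
\end{equation*}

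Next, I would exploit that, since $\mathbf{u}$ is the unit axial vector of $\hat\Omega$, we have $\hat\Omega \mathbf{v} = \mathbf{u}\times \mathbf{v}$ and $\hat\Omega^2 \mathbf{v} = (\mathbf{u}\cdot\mathbf{v})\mathbf{u} - \mathbf{v}$ for any $\mathbf{v}$. Decomposing $\mathbf{w} = \alpha\mathbf{u} + \mathbf{w}_\perp$ with $\mathbf{w}_\perp \perp \mathbf{u}$ and using $\hat\Omega \mathbf{u} = \mathbf{0}$, one obtains $\hat\Omega \mathbf{w} = \mathbf{u}\times\mathbf{w}_\perp$ and $\hat\Omega^2 \mathbf{w} = -\mathbf{w}_\perp$, so the displayed equation collapses to
\begin{equation*}
\sin\phi\, (\mathbf{u}\times \mathbf{w}_\perp) \;-\; (1-\cos\phi)\, \mathbf{w}_\perp \;=\; \mathbf{0}.
\end{equation*}

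The crux is then the orthogonality observation: when $\mathbf{w}_\perp \neq \mathbf{0}$, the vectors $\mathbf{u}\times \mathbf{w}_\perp$ and $\mathbf{w}_\perp$ are both nonzero and mutually perpendicular (the cross product is orthogonal to each factor), hence linearly independent. Therefore the two coefficients must vanish separately, which forces $\sin\phi = 0$ and $1-\cos\phi = 0$ simultaneously, i.e.\ $\phi \equiv 0 \pmod{2\pi}$. The only remaining alternative is $\mathbf{w}_\perp = \mathbf{0}$, in which case $\mathbf{w} = \alpha \mathbf{u}$ is parallel to $\mathbf{u}$, and equivalently $\hat\Omega \mathbf{w} = \mathbf{0}$.

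I do not expect a real obstacle: the calculation is just a direct application of the Euler-Rodrigues representation already recorded in \eqref{eq:ER} together with the standard cross-product identity for $\hat\Omega^2$. The only interpretive subtlety is reading the stated conclusion ``$\mathbf{u} = \mathbf{w}$'' as ``$\mathbf{w}$ is parallel to $\mathbf{u}$'', which is precisely the parenthetical equivalent form $\hat\Omega \mathbf{w} = \mathbf{0}$ in the lemma statement.
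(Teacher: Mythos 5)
Your proof is correct and essentially identical to the paper's: both apply the Euler--Rodrigues formula, reduce $R\mathbf{w}=\mathbf{w}$ to $\sin\phi\,\hat\Omega\mathbf{w} + (1-\cos\phi)\,\hat\Omega^2\mathbf{w}=\mathbf{0}$, and use the mutual orthogonality of $\hat\Omega\mathbf{w}=\mathbf{u}\times\mathbf{w}$ and $\hat\Omega^2\mathbf{w}=\mathbf{u}\times(\mathbf{u}\times\mathbf{w})$ to force $\sin\phi=0$ and $1-\cos\phi=0$. Your explicit decomposition $\mathbf{w}=\alpha\mathbf{u}+\mathbf{w}_\perp$ is a minor presentational variant that, if anything, makes the dichotomy ($\mathbf{w}_\perp=\mathbf{0}$ versus both coefficients vanish) slightly more transparent than the paper's informal ``if $\mathbf{w}=\mathbf{u}$, \dots; otherwise \dots'' framing.
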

\begin{proof}
If ${\bf w} = {\bf u}$, then clearly, ${\hat \Omega} {\bf w} = {\bf 0}$; otherwise, by Rodriguez formula
$$ R{\bf w} = {\bf w} \implies [I +  {\hat \Omega} \sin \phi + {\hat \Omega}^2 (1-\cos \phi)] {\bf w} = {\bf w}. $$
 Using the vector calculus notation, this would amount to 
$$({\bf u} \times {\bf w}) \sin \phi + (1-\cos \phi) {\bf u} \times ({\bf u} \times {\bf w}) = 0. $$
Noting that the vectors ${\bf u} \times {\bf w}$ and ${\bf u} \times ({\bf u} \times {\bf w})$ are perpendicular to each other and their linear combination is zero implies that $\sin \phi = 0$ and $1- \cos\phi = 0$; hence, $\phi = 0$. 
\end{proof}

We will now show that $GCG$ and $GCC$ (and by symmetry, $CCG$) paths are not optimal, which is crucial for later proofs.

\begin{lemma}
Any non-trivial $GCG$ and $GCC$ paths are not optimal.
\label{lem:GCG_GCC}
\end{lemma}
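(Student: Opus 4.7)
The plan is to examine $\psi(s) = (A(s), B(s), C(s))^T$ at the junction points of each segment and apply Lemma \ref{lem:skew} to force the interior $C$ arc to collapse to a point.

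First, I would establish that on any $G$-segment one has $\psi(s) \equiv (0,0,-1)^T$. Indeed, along a $G$-segment $u_g \equiv 0$, so by Pontryagin's principle $A \equiv 0$; the adjoint equations then give $B \equiv 0$; and $H = 1 + C + u_g A \equiv 0$ yields $C \equiv -1$. In particular $\|\psi\| = 1$, consistent with the preceding lemma on norm preservation when a great arc is present.

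Next, for a non-trivial $GCG$ path with middle $C$-segment on $[s_1, s_2]$, continuity of $\psi$ forces $\psi(s_1) = \psi(s_2) = (0,0,-1)^T$. On $[s_1, s_2]$, $\psi$ evolves via a proper rotation $\psi(s_2) = e^{\hat\Omega_i \phi}\psi(s_1)$ with $\phi = (s_2-s_1)/r > 0$ and $\hat\Omega_i \in \{\hat\Omega_L, \hat\Omega_R\}$. Applying Lemma \ref{lem:skew} with ${\bf w} = (0,0,-1)^T$, either $\phi = 0$ (contradicting non-triviality) or ${\bf w}$ coincides with the axial vector ${\bf u}_{L/R} = (\mp\sqrt{1-r^2},\,0,\,r)^T$. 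The latter is impossible because, for any $r \in (0,1)$, the first coordinate of ${\bf u}_{L/R}$ is nonzero while that of ${\bf w}$ is zero. This contradiction rules out the non-trivial $GCG$ path.

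For a non-trivial $GCC$ path with segments on $[s_0,s_1]$ and $[s_1,s_2]$, where $s_0$ is the end of the $G$ arc and $s_1$ is the inflection point between the two $C$ arcs, I would use Lemma \ref{lem:inflexion} to get $A(s_1) = 0$ and $C(s_1) = -1$. Combined with $\|\psi(s_1)\|^2 = 1$ (inherited from the $G$-arc via the norm preservation lemma), this forces $B(s_1) = 0$. Hence $\psi(s_0) = \psi(s_1) = (0,0,-1)^T$, and the very same Lemma \ref{lem:skew} argument applied to the first $C$-segment forces its arc angle $\phi$ to vanish, collapsing $GCC$ to $GC$. By the symmetry $s \mapsto L-s$, the same conclusion rules out $CCG$.

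The main obstacle is conceptual rather than computational: it is recognizing that the boundary values of $\psi$ on either side of the interior $C$-arc completely pin down the action of $e^{\hat\Omega_i \phi}$ on the specific vector $(0,0,-1)^T$, after which the rigidity of rotations (Lemma \ref{lem:skew}) delivers the contradiction. The remaining computation is the trivial check that $(0,0,-1)^T$ is not parallel to the axial vector of either $\hat\Omega_L$ or $\hat\Omega_R$, which holds for every $r \in (0,1)$.
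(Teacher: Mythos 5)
Your proof is correct and follows the same approach as the paper: establish $\psi=(0,0,-1)^T$ at both endpoints of an interior $C$-arc (via the norm-preservation lemma and Lemma~\ref{lem:inflexion}), then invoke the rigidity of rotations from Lemma~\ref{lem:skew} to force the arc angle to vanish. The only cosmetic difference is that you derive $\psi\equiv(0,0,-1)^T$ directly on the $G$-segment, whereas the paper reaches the same boundary values through the constancy of $\|\psi\|$; the contradiction with the axial vector is identical.
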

\begin{proof}
A non-trivial path consists of arcs of non-zero length. Note that a $GCG$ path contains a great circular arc, and consequently, $A^2(s) +B^2(s) +C^2(s) \equiv 1$ throughout the path. Let the two inflexion points be $s_1$ and $s_2$ and the smaller circular arc corresponds to $s \in [s_1, s_2)$. Since $s_1, s_2$ are inflexion points, we know that $A(s_1) = 0, C(s_1) = -1$; similarly, $A(s_2) = 0, C(s_2) = -1$. This also implies that $B(s_1) = 0$ and $B(s_2) = 0$ as $A^2(s_1) + B^2(s_1) + C^2(s_1) = A^2(s_2) + B^2(s_2) +C^2(s_2) = 1$. Since $u(s) = U_2 \ne 0$ on $[s_1, s_2)$ as it corresponds to a smaller circular arc, we know that 
\begin{align}
&\begin{pmatrix} A(s_2) \\ B(s_2) \\ C(s_2) \end{pmatrix} = e^{\Omega_1(s_2-s_1)}\begin{pmatrix} A(s_1) \\ B(s_1) \\ C(s_1) \end{pmatrix} =e^{{\hat \Omega}_1\phi}\begin{pmatrix} A(s_1) \\ B(s_1) \\ C(s_1) \end{pmatrix} , \\
% \end{eqnarray}
% Equivalently,
% \begin{eqnarray}
&\iff \underbrace{\begin{pmatrix} 0 \\ 0 \\ -1 \end{pmatrix}}_{\bf w} = e^{{\hat \Omega}_1\phi} \underbrace{\begin{pmatrix} 0 \\ 0 \\ -1 \end{pmatrix}}_{\bf w}. 
\end{align}
By the previous lemma,  ${\bf w}$ is an axial vector of $\hat \Omega_1$ or $\phi=0$. But neither of them is true, as the former would mean $r = 1$ and the latter would imply triviality of the $GCG$ path.  Hence, any non-trivial $GCG$ path cannot be optimal.

A similar reasoning holds for the $GCC$ (and by symmetry the $CCG$) path.
\end{proof}

%==============================================================
% --- Non-optimal CCCC paths ---
%==============================================================
\subsection{Non-optimality of $CCCC$ paths}
In this section, we will focus on proving that $CCCC$ paths are not optimal for $0<r\le \frac{1}{2}$. This result is formally stated in Theorem \ref{th:CCCC} at the end of the section.
The proof of this theorem is accomplished in the following steps:
\begin{itemize}
    \item First we show that the second and third circular arc lengths are equal and exceed the semi-perimeter (i.e., are of length greater than $\pi r$). This is shown in Lemma \ref{subpath}.
    \item We then show that a $CCC$ subpath of a $CCCC$ path with the last circular arcs of the same length and exceeding semi-perimeter in length cannot be optimal if $0< r \le \frac{1}{2}$. This is shown in Lemma \ref{lem:nonoptimalityofCCC}.
\end{itemize}
\begin{lemma}
\label{subpath}
For a non-trivial $CCCC$ path to be optimal, the length of the middle two arcs must be equal and greater than $\pi r$.
\end{lemma}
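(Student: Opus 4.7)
Let $s_1 < s_2 < s_3$ be the inflection points of the non-trivial $CCCC$ path and set $\phi_i := (s_{i+1}-s_i)/r$, with $s_0$ and $s_4$ denoting the path's endpoints. Since each inflection is a bang--bang switch between $\pm U_{\max}$, the controls on the four arcs must alternate; up to reversing the overall sign, call them $U,-U,U,-U$ with $U = U_{\max}$. I would apply Lemma \ref{lem:inflexion} to the two overlapping $CCC$ subpaths (arcs $1$-$2$-$3$ and arcs $2$-$3$-$4$) to obtain the length bounds on $\phi_2$ and $\phi_3$ essentially for free, and then pin down equality of those lengths by evaluating $B(s_2)$ from both sides.

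First, Lemma \ref{lem:inflexion} gives $A(s_i) = 0$ and $C(s_i) = -1$ at each inflection, and applied to each $CCC$ subpath it forces $\phi_2 > \pi$ and $\phi_3 > \pi$. The standard optimality observation that no single arc can have length $\ge 2\pi r$ (otherwise deleting a full loop shortens the path) gives $\phi_2,\phi_3 \in (\pi, 2\pi)$, so $\phi_2/2,\phi_3/2 \in (\pi/2,\pi)$, an interval on which $\tan$ is strictly monotone.

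Next I would reuse the explicit propagator already derived inside the proof of Lemma \ref{lem:inflexion}: on any $C$-arc with control $U_{\text{arc}}$, initial data $A(s_i)=0$, $C(s_i)=-1$ at the arc's starting inflection $s_i$, and arc angle $\phi$ measured from $s_i$,
\begin{equation*}
A(s) = r\bigl(B(s_i)\sin\phi - rU_{\text{arc}}(1-\cos\phi)\bigr), \qquad B(s) = B(s_i)\cos\phi - rU_{\text{arc}}\sin\phi.
\end{equation*}
Imposing $A(s_{i+1})=0$ yields $B(s_i) = rU_{\text{arc}}\tan(\phi_i/2)$, and substituting back into $B(s_{i+1})$ and using a half-angle identity gives $B(s_{i+1}) = -rU_{\text{arc}}\tan(\phi_i/2)$. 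Applying this to arc $2$ (control $-U$) produces $B(s_2) = rU\tan(\phi_2/2)$, while applying the formula $B(s_i) = rU_{\text{arc}}\tan(\phi_i/2)$ at the start of arc $3$ (control $+U$) produces $B(s_2) = rU\tan(\phi_3/2)$. Equating and using the monotonicity of $\tan$ on $(\pi/2,\pi)$ forces $\phi_2=\phi_3$, which is exactly the claim.

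The only real obstacle is bookkeeping of signs: the alternating arc controls must be threaded carefully through the propagator, and the half-angle simplification $B(s_i)\cos\phi_i - rU_{\text{arc}}\sin\phi_i = -rU_{\text{arc}}\tan(\phi_i/2)$ (after $B(s_i)$ has been eliminated via $A(s_{i+1})=0$) is the single nontrivial algebraic step. Everything else follows immediately from Lemma \ref{lem:inflexion}.
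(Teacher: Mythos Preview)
Your proof is correct, and it takes a genuinely different route from the paper's. The paper argues via conservation of $\|\psi\|^2=1+B_0^2$ together with Lemma~\ref{lem:skew}: since $B(s_i)\in\{\pm B_0\}$ at each inflection and the axial-vector argument forbids $B(s_2)=B(s_1)$ and $B(s_3)=B(s_2)$, one gets $\psi(s_1)=\psi(s_3)$; the relation $e^{\hat\Omega_1\phi_2}\psi(s_1)=e^{-\hat\Omega_2\phi_3}\psi(s_1)$ is then expanded via Euler--Rodrigues and reduced to a nonsingular $2\times2$ system yielding $\cos\phi_2=\cos\phi_3$ and $\sin\phi_2=\sin\phi_3$. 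Your argument instead stays with the scalar propagator already established inside Lemma~\ref{lem:inflexion}: evaluating $B(s_2)$ once as the terminal value on arc~2 and once as the initial value on arc~3 gives $\tan(\phi_2/2)=\tan(\phi_3/2)$, and monotonicity of $\tan$ on $(\pi/2,\pi)$ finishes. Your approach is shorter and avoids the matrix-exponential bookkeeping and Lemma~\ref{lem:skew}; the trade-off is that it needs the range $\phi_2,\phi_3\in(\pi,2\pi)$ \emph{before} concluding equality (so that the relevant half-angles sit in a single branch of $\tan$), whereas the paper's $(\cos,\sin)$ identification is branch-free and only invokes Lemma~\ref{lem:inflexion} afterwards for the length bound. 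Both routes are valid; yours leans more heavily on what has already been proved, the paper's is more self-contained at the matrix level.
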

\begin{proof}
Let the length of the $CCCC$ path be $L$, with each segment corresponding to one of the four intervals $[s_0, s_1), [s_1, s_2), [s_2, s_3), [s_3, s_4)$, with $s_0 = 0, \quad s_4 = L$. The control input on the interval  $[s_i, s_{i+1})$ is $u_i$, with $u_i = (-1)^{i-1} U$. 

Define 
$${\hat \Omega}_1 = \begin{pmatrix}
0&k_z&0\\-k_z&0&k_x \\ 0&-k_x&0
\end{pmatrix}, \quad \quad {\hat \Omega}_2 = \begin{pmatrix}
0&k_z&0\\-k_z&0&-k_x \\ 0&k_x&0
\end{pmatrix}. $$
Note that $s_1, s_2, s_3$ are inflexion points. 
Define 
$$\phi_1 := \frac{s_1}{r} , \quad \phi_2 := \frac{(s_2-s_1)}{r}, \quad \phi_3 := \frac{(s_3-s_2)}{r}, \quad \phi_4 := \frac{(s_4-s_3)}{r}.$$
Note that $\phi_1, \phi_2, \phi_3, \phi_4$ are the angles subtended at their respective centers by the four circular arcs in the $CCCC$ path; since the circular arcs are of the same radius, it suffices to show that $\phi_2 = \phi_3$. 

Since great circular arc is not part of the CCCC path, let $B(s_1) = B_0 \ne 0$; correspondingly
$$A^2(s)+B^2(s)+C^2(s) = 1+ B_0^2. $$
At an inflexion point, $s_i$, we have $A(s_i) = 0$ and $C(s_i) = -1$. Hence, at any inflexion point, we must have $B(s_i) = \pm B_0$. Hence, at $s_2$ and $s_3$, we must have:
\begin{equation*}\begin{pmatrix}A(s_2) \\ B(s_2) \\ C(s_2)
\end{pmatrix}, \begin{pmatrix}A(s_3) \\ B(s_3) \\ C(s_3)
\end{pmatrix} \in \left\{ \begin{pmatrix} 0 \\ B_0 \\ -1
\end{pmatrix}, \begin{pmatrix} 0 \\ -B_0 \\ -1
\end{pmatrix}\right\}. 
\end{equation*}
Consider the inflexion point $s_2$; suppose
    \begin{eqnarray}
    \begin{pmatrix}A(s_2) \\ B(s_2) \\ C(s_2)
    \end{pmatrix} = \underbrace{\begin{pmatrix} 0 \\ B_0 \\ -1
    \end{pmatrix}}_{{\bf u}_1}.
    \end{eqnarray}
    However, this would imply 
    $$ {\bf u}_1 = e^{{\hat \Omega}_1 \phi_2}{{\bf u}_1}.
    $$
    In other words, ${\bf u}_1$ must be the axial vector of ${{\hat \Omega}_1}$ or $s_2-s_1=0$; neither of them is true - the former is not true because the axial vector of $\Omega_1$ is $\begin{pmatrix}\pm \sqrt{1-r^2}\\ 0 \\ r\end{pmatrix}$; the latter is not true because the trajectory is non-trivial.
    Hence, \begin{eqnarray}
    \begin{pmatrix}A(s_2) \\ B(s_2) \\ C(s_2)
    \end{pmatrix} = \underbrace{\begin{pmatrix} 0 \\ -B_0 \\ -1
    \end{pmatrix}}_{{\bf u}_2}.
    \end{eqnarray}
    Consider the inflexion point $s_3$. By the same reasoning as that for inflexion point $s_2$, we must have 
    \begin{eqnarray}
    \begin{pmatrix}A(s_3) \\ B(s_3) \\ C(s_3)
    \end{pmatrix} = \begin{pmatrix} 0 \\ B_0 \\ -1
    \end{pmatrix}.
    \end{eqnarray}
    However, this would imply that 
    $${\bf u}_2 = e^{{\hat \Omega}_1 \phi_2}{\bf u}_1, \quad {\bf u}_1 = e^{{\hat \Omega}_2 \phi_3}{\bf u}_2. $$
    Combining, 
    $$[e^{{\hat \Omega}_1 \phi_2} - e^{-{\hat \Omega}_2 \phi_3}]{\bf u}_1 = 0. $$
    % If we define $\hat \Omega_1 = \frac{1}{\sqrt{1+U_{max}^2}}\Omega_1$ and $\hat \Omega_2 = \frac{1}{\sqrt{1+U_{max}^2}}\Omega_2$, then, 
    % $$e^{\Omega_1 (s_2-s_1)} = e^{\hat{\Omega}_1 \phi_2}, \quad e^{\Omega_2 (s_3-s_2)} = e^{\hat{\Omega}_2 \phi_3}, \quad \quad [e^{\hat{\Omega}_1\phi_2} - e^{-\hat{\Omega}_2 \phi_3}]{\bf u}_1 = 0. $$
    Using Euler-Rodriguez formula for exponential of a skew-symmetric matrix for $e^{{\hat \Omega}_1 \phi_2}$ and $e^{-{\hat \Omega}_2 \phi_3}$, we get:
    \begin{align*}
 &   \begin{pmatrix}
1-k_z^2(1-\cos \phi_2)&k_z \sin \phi_2 & k_xk_z(1-\cos\phi_2) \\ -k_z \sin \phi_2 & \cos \phi_2 & k_x \sin \phi_2 \\ k_xk_z(1-\cos \phi_2) &-k_x \sin \phi_2 & 1-k_x^2(1-\cos \phi_2)
\end{pmatrix}\begin{pmatrix} 0 \\ B_0 \\ -1 \end{pmatrix} =\\ 
&\begin{pmatrix}
1-k_z^2(1-\cos \phi_3)& -k_z \sin \phi_3 & -k_xk_z(1-\cos\phi_3) \\ k_z \sin \phi_3 & \cos \phi_3 & k_x \sin \phi_3 \\ -k_xk_z(1-\cos \phi_3) &-k_x \sin \phi_3 & 1-k_x^2(1-\cos \phi_3)
\end{pmatrix}\begin{pmatrix} 0 \\B_0 \\ -1 \end{pmatrix}.
% \implies
% &\begin{pmatrix}
%     k_z^2(\cos \phi_2 - \cos \phi_3) &k_z(\sin \phi_2 + \sin \phi_3)& k_xk_z(2-\cos \phi_2 - \cos \phi_3) \\
%     -k_z (\sin \phi_2 - \sin \phi_3)& \cos \phi_2 - \cos \phi_3 & k_z(\sin \phi_2 - \sin \phi_3) \\
%     k_xk_z(2 - \cos \phi_2 - \cos \phi_3) & -k_z(\sin \phi_2 - \sin \phi_3) & k_x^2(\cos \phi_2 - \cos \phi_3)
% \end{pmatrix} \begin{pmatrix}0\\ B_0 \\ -1 \end{pmatrix} = \begin{pmatrix} 0 \\ 0 \\ 0 \end{pmatrix}.
    \end{align*}
    The last two equations of the above set of equations are:
    \begin{eqnarray*}
    B_0 (\cos \phi_2 - \cos \phi_3) -k_x( \sin \phi_2 - \sin \phi_3) &=& 0, \\
    -B_0k_x(\sin \phi_2 - \sin \phi_3) - k_x^2(\cos \phi_2 - \cos \phi_3) &=& 0.
    \end{eqnarray*}
    Since $k_x \ne 0$, 
    $$\begin{pmatrix}
    B_0 & -k_x \\ k_x & B_0 
    \end{pmatrix} \begin{pmatrix}
    \cos \phi_2 - \cos \phi_3 \\ \sin \phi_2 - \sin \phi_3
    \end{pmatrix} = \begin{pmatrix}
    0\\ 0
    \end{pmatrix},$$
    this implies $ \cos \phi_2 - \cos \phi_3 = 0$ and $\sin \phi_2 - \sin \phi_3 = 0$ as $B_0^2+k_x^2 \ne 0$!
    Consequently, $\phi_2 = \phi_3$.

     \end{proof}
    % \begin{proof}
   To prove the theorem, it is necessary to distinguish a circular arc by its orientation, i.e., whether $u(s) = U_{max}$ or $u(s) = -U_{max}$. An arc, $C$, is of type L if corresponding $u_g(s) = -U_{max}$ and of type $R$ if corresponding $u_g(s) = U_{max}$. Hence, we can be more specific and distinguish between concatenations; for example, while $LGR$, $LGL$, $RGL$, $RGR$ are all paths of type $CGC$, they are clearly four different paths. To make paths and the corresponding arc angles explicit, we write $L_{\alpha}R_{\beta}L_{\gamma}$ for a concatenated path of three arcs, the first of which is of type $L$ and has an angle $\alpha$, the second arc is of type $R$ and of angle $\beta$ and the third is of type $L$ and of angle $\gamma$.
   
   If we had a non-trivial $CCCC$ path, it can be of type $LRLR$ or $RLRL$; optimality of path necessitates that two central arcs have arc lengths exceeding $\pi r$ and be equal. To show non-optimality, it suffices to consider non-optimality of paths of type $LRLR$ (and by reflection symmetry, the result also holds for $RLRL$). To show non-optimality of $LRLR$, it suffices to show that a subpath of $LRLR$ consisting of only three segments is non-optimal. 
   Consider a subpath, $L_{\alpha}R_{\pi+\phi}L_{\pi+\phi}$, with the angle of the first arc $0<\alpha <<1$ while the second and third arcs be $\pi + \phi$, with $0<\phi< \pi$.  We will show that this path is non-optimal if $0<r \le \frac{1}{2}$ to conclude the proof of Theorem 2.
   
   \begin{lemma}
   \label{lem:nonoptimalityofCCC}
    For any $r\in(0,\frac{1}{2}]$, $\alpha>0$, and $\phi \in (0, \pi)$, the path $L_{\alpha}R_{\pi+\phi}L_{\pi+\phi}$ is not optimal; in particular, there is a path of type $RLR$ with smaller length certifying its non-optimality.
   \end{lemma}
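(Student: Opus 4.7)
The plan is to construct an explicit path of type $R_{\alpha'}L_{\beta'}R_{\gamma'}$ that effects the same net change of configuration as $L_\alpha R_{\pi+\phi}L_{\pi+\phi}$, and then to verify the strict length inequality $\alpha'+\beta'+\gamma'<\alpha+2(\pi+\phi)$. Since $\mathcal{R}(s)=\mathcal{R}(0)e^{sK(u_g)}$ on each constant-control interval, each $L$- or $R$-arc of angle $\theta$ corresponds to right-multiplication of the configuration matrix by a rotation of angle $\theta$ in $SO(3)$ about the unit axis $\mathbf{u}_L=(-\sqrt{1-r^2},0,r)^T$ or $\mathbf{u}_R=(\sqrt{1-r^2},0,r)^T$ respectively (the same axial vectors that already appeared in the adjoint analysis). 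Letting $\mathcal{P}_L(\theta),\mathcal{P}_R(\theta)$ denote these elementary rotations, the lemma becomes the problem of solving
\begin{equation*}
\mathcal{P}_R(\alpha')\,\mathcal{P}_L(\beta')\,\mathcal{P}_R(\gamma')=\mathcal{P}_L(\alpha)\,\mathcal{P}_R(\pi+\phi)\,\mathcal{P}_L(\pi+\phi)
\end{equation*}
for non-negative $\alpha',\beta',\gamma'$ whose sum is strictly less than $\alpha+2(\pi+\phi)$.

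I would first evaluate the right-hand side in closed form. The inner block $\mathcal{P}_R(\pi+\phi)\mathcal{P}_L(\pi+\phi)$ is the product of two equal-angle rotations about axes that are mirror images in the $yz$-plane; via the Euler--Rodriguez formula used already in Lemma \ref{lem:inflexion}, its rotation angle and axis are explicit functions of $r$ and $\phi$. Pre-multiplying by $\mathcal{P}_L(\alpha)$ then gives an explicit target rotation $Q$. With $Q$ in hand, the next step is an Euler-angle-type decomposition of $Q$ about the sequence of axes $(\mathbf{u}_R,\mathbf{u}_L,\mathbf{u}_R)$. As long as $Q$ is not a gimbal-lock configuration relative to these axes, such a decomposition exists and yields closed-form expressions for $\alpha',\beta',\gamma'$; non-negativity is enforced by choosing the correct principal branch in $[0,2\pi)$, and the genericity condition can be verified from $\alpha>0$ and $\phi\in(0,\pi)$.

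The crucial and most technical step is the length comparison $\alpha'+\beta'+\gamma'<\alpha+2(\pi+\phi)$. After the above reductions it takes the form of a trigonometric inequality in $r\in(0,\tfrac12]$, $\phi\in(0,\pi)$, with $\alpha>0$ contributing a strict-excess term. The threshold $r=\tfrac12$ is the value at which the relevant trigonometric expressions---built from $k_x=\sqrt{1-r^2}$, $k_z=r$, and half-angles of $\pi+\phi$---change sign; above this threshold the inequality can fail, which is consistent with the introduction's remark that $CCCC$ can be optimal for certain $r>\tfrac12$. I expect the length deficit $\alpha+2(\pi+\phi)-\alpha'-\beta'-\gamma'$ to factor as (a strictly positive function of $\phi\in(0,\pi)$)$\,\cdot\,$(a function of $r$ that is non-negative precisely on $(0,\tfrac12]$), plus a positive $\alpha$-linear contribution. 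The main obstacle is performing the Euler--Rodriguez algebra cleanly enough that this factorization is transparent, and then establishing the sign of the $r$-dependent factor on $(0,\tfrac12]$; a useful intermediate simplification is to parametrize the candidate $RLR$ by the ``deflection'' angle of its middle $L$-arc, reducing the three unknowns to one and turning the key estimate into a one-variable bound that can be controlled via $r\le\tfrac12$.
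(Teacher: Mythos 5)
Your geometric setup is sound and matches the paper's: each arc is a rotation about the fixed axis $\mathbf{u}_L$ or $\mathbf{u}_R$, and the lemma reduces to showing that the rotation $R_L(\alpha)R_R(\pi+\phi)R_L(\pi+\phi)$ admits an $R$-$L$-$R$ decomposition of strictly smaller total angle. But there is a genuine gap in your plan for the length comparison. You expect the length deficit to factor with ``a positive $\alpha$-linear contribution.'' It does not. The paper's perturbation expansion shows that the first-order coefficients satisfy $a_1+a_2+a_3 = 1$ \emph{exactly}, so the deficit $\Delta(\alpha)=\alpha-\xi(\alpha)-\eta(\alpha)-\beta(\alpha)$ has no linear term in $\alpha$ whatsoever. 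The entire gain is second-order: $\Delta(\alpha)=-\tfrac12(b_1+b_2+b_3)\alpha^2 + O(\alpha^3)$, and the role of $r\le\tfrac12$ is to force $b_1+b_2+b_3<0$. This first-order degeneracy is the central technical obstacle, and a plan that anticipates an $\alpha$-linear surplus is built on an incorrect picture of the problem.

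This also explains why the paper does not attempt your exact closed-form route. Because the comparison is degenerate at first order, the exact trigonometric inequality would be the difference of two nearly equal quantities, and the factorization you hope for would be extremely delicate to extract. Furthermore, the branch-selection problem you flag (choosing the correct Euler decomposition in $[0,2\pi)$) is non-trivial precisely in this degenerate regime: there are multiple $RLR$ decompositions, and identifying the shorter one requires exactly the second-order information. The paper handles both issues at once by reducing without loss of generality to $0<\alpha\ll 1$ (any longer path contains such a subpath, and a non-optimal subpath implies a non-optimal path), and then using the implicit-function/Taylor-series structure of equation \eqref{eqn:Main CCCC equation} with $\xi(0)=\eta(0)=\beta(0)=0$; continuity from the degenerate $\alpha=0$ configuration both picks out the correct branch and exposes the second-order nature of the gain. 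Your plan would likely become tractable only after you independently rediscover this degeneracy, at which point you would essentially be doing the paper's perturbation computation. The $\mathbf{u}_L^T(\cdot)\mathbf{u}_R$-type contractions used to extract $a_1,a_2,a_3,b_1,b_2,b_3$ are also worth noting: they are the mechanism by which the paper isolates the Taylor coefficients from the matrix equation, and there is no obvious exact-algebra analogue of that step in your plan.
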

   
    \begin{proof}

    Without any loss of generality, we may assume $0<\alpha <<1$; otherwise, there is a subpath of the path with this property which we can show is non-optimal. We may employ regular perturbation technique to study the non-optimality of $L_{\alpha}R_{\pi+\phi}L_{\pi+\phi}$ for the case $\alpha <<1$. It is easier to see that 
    angles corresponding to arcs in the RLR paths may be chosen to be $\pi + \phi + \xi(\alpha), \pi + \phi + \eta(\alpha)$ and $\beta(\alpha)$ respectively, with $\beta(0) = 0, \xi(0) = 0, \eta(0) = 0$. Note that the angles $\xi(\alpha),\eta(\alpha),\beta(\alpha)$ can be expressed as functions of the perturbation variable $\alpha$.

       Define 
       $$R_L(\alpha) = e^{{\hat \Omega}_L \alpha}, \quad \quad R_R(\alpha) = e^{{\hat \Omega}_R\alpha}. $$ 
   The following equation must hold for the final configurations of $LRL$ and $RLR$ paths to be the same starting from the same initial configuration:
    \begin{equation} \label{eqn:Main CCCC equation}
    \begin{split}
    R_L(\alpha)R_R(\pi+\phi)R_L(\pi+\phi) = R_R(\pi+\phi+\xi(\alpha))R_L(\pi+\phi+\eta(\alpha))R_R(\beta(\alpha)).
    \end{split}
    \end{equation}
    Essentially, the above equation implies that if the configuration is the same before and after the three turns, whether you take a $LRL$ path or a $RLR$ path; however, the total distance traveled by the paths can be different. To determine non-optimality of $L_{\alpha}R_{\pi+\phi}L_{\pi+\phi}$, one must establish that the difference, $\Delta(\alpha)$, between the lengths of $LRL$ and $RLR$ paths i.e., 
    \begin{align*}
    \Delta (\alpha) := &\alpha + (\pi + \phi) + (\pi + \phi) - ((\pi+\phi + \xi(\alpha)) + (\pi + \phi + \eta(\alpha) + \beta(\alpha)) \\
    = &\alpha - \xi(\alpha) - \eta(\alpha) - \beta(\alpha). 
    \end{align*}
    The $LRL$ path will not be optimal if,
    for sufficiently small $\alpha >0$, $\Delta (\alpha)>0$  or equivalently
    \begin{eqnarray}
    \label{eq:Comparison}
    0 > \xi(\alpha) + \beta(\alpha) + \eta(\alpha) - \alpha
    \end{eqnarray} when equation \eqref{eqn:Main CCCC equation} is satisfied. 
    
    % Clearly, when $\alpha = 0$, $\xi(0) = 0, \eta(0) =0$ and $\beta(0) =0$ will ensure that the default case considered is the $R_{\pi+\phi}L_{\pi+\phi}$ path with angles $(\pi+\phi)$ for both arcs. 
    
    We will employ Taylor's series expansion for $\xi(\alpha), \eta(\alpha)$ and $\beta(\alpha)$ to check if inequality \eqref{eq:Comparison} holds for sufficiently small $\alpha>0$. Noting that $\xi(0) = 0, \eta(0) = 0, \beta(0) = 0$ from equation \eqref{eqn:Main CCCC equation}, let
    \begin{align*}
    \xi(\alpha) := &a_1 \alpha + \frac{1}{2}b_1 \alpha^2 + \ldots, \\
    \eta(\alpha) := &a_2 \alpha + \frac{1}{2} b_2 \alpha^2 + \ldots \\
    \beta(\alpha) := &a_3 \alpha + \frac{1}{2} b_3 \alpha^2 + \ldots.
    \end{align*}
    It suffices to show that either (a) $a_1+a_2+a_3<1$ or (b) $a_1 + a_2 + a_3 =1$ and $b_1+b_2+b_3<0$ for equation \eqref{eq:Comparison} to hold. Essentially, the latter case indicates that the lengths of $LRL$ and $RLR$ paths are the same in the first order approximation, but the $LRL$ path is longer in the second order approximation. In what follows, we will show that the latter holds for the case at hand.

    Differentiating equation \eqref{eqn:Main CCCC equation} with respect to $\alpha$, we obtain:
    \begin{align}
    \nonumber
        {\hat \Omega}_L R_L(\alpha)R_R(\pi+\phi) & R_L(\pi+\phi) = \\
        &\frac{d \xi}{d \alpha}{\hat \Omega}_R R_R(\pi+\phi+\xi(\alpha))R_L(\pi+\phi+\eta(\alpha))R_R(\beta(\alpha)) \nonumber \\
        \nonumber
        + &\frac{d \eta}{d \alpha} R_R(\pi+\phi+\xi(\alpha))R_L(\pi+\phi+\eta(\alpha)){\hat \Omega}_LR_R(\beta(\alpha)) \\
        + &\frac{d \beta}{d \alpha} R_R(\pi+\phi+\xi(\alpha))R_L(\pi+\phi+\eta(\alpha))R_R(\beta(\alpha)){\hat \Omega}_R.
        \label{eqn:Main_First_Order}
    \end{align}
   Evaluating both sides of equation \eqref{eqn:Main_First_Order} at $\alpha = 0$:
    \begin{equation} \label{eqn:First Order}
    \begin{split}
    &{\hat \Omega}_L R_R(\pi+\phi)R_L(\pi+\phi)=a_1 {\hat \Omega}_RR_R(\pi+\phi)R_L(\pi+\phi)\\
    &+a_2R_R(\pi+\phi)R_L(\pi+\phi){\hat \Omega}_L+a_3R_R(\pi+\phi)R_L(\pi+\phi){\hat \Omega}_R.
    \end{split}
    \end{equation}

    It is helpful to note that the axial vectors ${\mathbf u}_L, {\mathbf u}_R$ satisfy the following properties that will aid in setting up the equations to solve for $a_1, a_2$ and $a_3$:
    $${\hat \Omega}_L {\bf u}_L = 0, \quad R_L(\alpha){\mathbf u}_L = {\mathbf u}_L, \quad {\hat \Omega}_R {\bf u}_R = 0, \quad R_R(\alpha){\mathbf u}_R = {\mathbf u}_R. $$
    % To solve for $a_1, a_2$ and $a_3$, we will get obtain a set of three {\it scalar} equations from equation \eqref{eqn:First Order} by pre-multiplying both sides of the equation by either ${\mathbf u}_L^T$ or ${\mathbf u}_R^T$ and post-multiplying both sides by ${\mathbf u}_L$ or ${\mathbf u}_R$.
    Pre-multiplying both sides of the equation \eqref{eqn:First Order} by ${\mathbf u}_L^T$ and post-multiplying by ${\mathbf u}_R$, we obtain:
    \begin{equation}
        \label{eqn:First Order eq 1}
        0= a_1 \underbrace{{\mathbf u}_L^T{\hat \Omega}_RR_R(\pi+\phi)R_L(\pi+\phi){\mathbf u}_R}_{A_{100LR}} + a_2 \underbrace{{\mathbf u}_L^TR_R(\pi+\phi)R_L(\pi+\phi){\hat \Omega}_L {\mathbf u}_R}_{A_{010LR}}.
    \end{equation}
    In the Appendix \ref{sec:appndxlem10}, we show that 
    $$A_{100LR} = A_{01LR} = 4r^2(1-r^2) \sin \phi (\cos \phi + (2r^2-1)(1+ \cos \phi)).$$
    If $0 <r \le \frac{1}{2}$, and $\phi \ne 0$, $A_{010LR} \ne 0$ as:
   \begin{align*}
   \cos \phi + (2r^2-1)(1+ \cos \phi) = & -1 + 2r^2(1+ \cos \phi) \\
%   \le -1 + \frac{1+\cos \phi}{2} = -\frac{1-\cos \phi}{2} <0.
    \le & -\frac{1-\cos \phi}{2} <0. 
   \end{align*}
    Hence, 
    \begin{equation} a_1 + a_2 = 0, \quad {\textit or} \quad a_1 = -a_2
    \end{equation}
    We obtain equation \eqref{eqn:First Order eq 2} by pre-multiplying both sides of equation \eqref{eqn:First Order} with ${\mathbf u}_R^T$ and post-multiplying by ${\mathbf u}_L$:
    \begin{eqnarray}
    \nonumber
    {\mathbf u}_R^T{\hat \Omega}_L R_R(\pi+\phi)R_L(\pi+\phi){\mathbf u}_L &=& a_3 {\mathbf u}_R^TR_R(\pi+\phi)R_L(\pi+\phi){\hat \Omega}_R {\mathbf u}_L, \\
      \label{eqn:First Order eq 2}
     \implies \underbrace{{\mathbf u}_R^T{\hat \Omega}_L R_R(\pi+\phi){\mathbf u}_L}_{A_{000RL}}  &=&   a_3 \underbrace{{\mathbf u}_R^TR_L(\pi+\phi){\hat \Omega}_R {\mathbf u}_L}_{A_{001RL}}.
    \end{eqnarray}
    In the Appendix \ref{sec:appndxlem10}, we  show that 
    $$A_{000RL} = A_{001RL} = -4r^2(1-r^2)\sin \phi.$$
    Hence 
    $$a_3 =1 \implies a_1+a_2+a_3 = 1. $$
    In the first order approximation $\Delta(\alpha) = (a_1+a_2+a_3-1)\alpha = 0.$
    The second order approximation of $\Delta(\alpha)$ will require the values of $a_1$ and $a_2$ and hence we now set out to find $a_1$ and $a_2$ by setting up equation
     \eqref{eqn:First Order eq 3} by pre-multiplying both sides of equation \eqref{eqn:First Order} with ${\mathbf u}_R^T$ and post-multiplying by ${\mathbf u}_R$:
     \begin{align}
     \label{eqn:First Order eq 3}
     &{\mathbf u}_R^T{\hat \Omega}_L R_R(\pi+\phi)R_L(\pi+\phi) {\mathbf u}_R = a_2 {\mathbf u}_R^TR_R(\pi+\phi)R_L(\pi+\phi){\hat \Omega}_L {\mathbf u}_R, \\
     & \implies 
     \underbrace{{\mathbf u}_R^T{\hat \Omega}_L R_R(\pi+\phi)R_L(\pi+\phi) {\mathbf u}_R}_{A_{000RR}} = a_2 \underbrace{{\mathbf u}_R^TR_L(\pi+\phi){\hat \Omega}_L {\mathbf u}_R}_{A_{010RR}}.
     \end{align}
     In the Appendix \ref{sec:appndxlem10}, we show that 
     $$A_{000RR} = 4r^2(1-r^2) \sin \phi (1-2r^2(1+\cos \phi)), \, A_{001RR} = 4r^2(1-r^2) \sin \phi,
     $$
     implying that 
     \begin{eqnarray}
     a_2 = (1-2r^2(1+\cos \phi)) = -a_1. 
     \end{eqnarray}

    We also notice that $-a_1 = a_2 >0$ for $r \in [0, \frac{1}{2}]$ and $\phi \in (0, \pi)$.
    
    The following equation, obtained by differentiating both sides of equation \eqref{eqn:First Order} with respect to $\alpha$ and evaluating at $\alpha = 0$, is useful for obtaining the set of linear equations to determine  $b_1, b_2, b_3$: 
    \begin{equation} \label{eqn:Second Order}
    \begin{split}
    &{\hat \Omega}_L^2R_R(\pi+\phi)R_L(\pi+\phi) \\ 
    &=b_1[{\hat \Omega}_RR_R(\pi+\phi)R_L(\pi+\phi)]+a_1^2[{\hat \Omega}_R^2R_R(\pi+\phi)R_L(\pi+\phi)]\\
    &+a_1a_2[{\hat \Omega}_RR_R(\pi+\phi)R_L(\pi+\phi)\Omega_L]+a_1a_3[{\hat \Omega}_RR_R(\pi+\phi)R_L(\pi+\phi)\Omega_R]\\
    &+b_2[R_R(\pi+\phi)R_L(\pi+\phi){\hat \Omega}_L]+a_1a_2[{\hat \Omega}_RR_R(\pi+\phi)R_L(\pi+\phi){\hat \Omega}_L]\\
    &+a_2^2[R_R(\pi+\phi)R_L(\pi+\phi){\hat \Omega}_L^2]+a_2a_3[R_R(\pi+\phi)R_L(\pi+\phi){\hat \Omega}_L{\hat \Omega}_R]\\
    &+b_3[R_R(\pi+\phi)R_L(\pi+\phi){\hat \Omega}_R]+a_1a_3[{\hat \Omega}_RR_R(\pi+\phi)R_L(\pi+\phi){\hat \Omega}_R]\\
    &+a_2a_3[R_R(\pi+\phi)R_L(\pi+\phi){\hat \Omega}_L{\hat \Omega}_R]+a_3^2[R_R(\pi+\phi)R_L(\pi+\phi){\hat \Omega}_R^2].
    \end{split}
    \end{equation}
    As in the first order case, we pre-multiply both sides of the above equation \eqref{eqn:Second Order} by  ${\mathbf u}_L^T$  and post-multiply by ${\mathbf u}_R$ to get the first equation, and pre-multiply by ${\mathbf u}_R^T$ and post-multiply by ${\mathbf u}_L$ to get the second equation. These two equations will suffice to get $(b_1+b_2+b_3)$ as the first equation will provide, as in the first order case, the value of $b_1+b_2$, while the second one will provide $b_3$. 
    Similar to the first order case, the following set of equations provide the necessary set of linear equations to determined $b_1,b_2,b_3$:
    \begin{align} 
    \nonumber
    &0 = b_1 \underbrace{{\bf u}_L^T {\hat \Omega}_R R_R(\pi+ \phi) R_L(\pi+ \phi){\bf u}_R}_{A_{100LR}}
    + b_2 \underbrace{{\bf u}_L^T R_R(\pi+ \phi) R_L(\pi+ \phi){\hat \Omega}_L{\bf u}_R}_{A_{010LR}} \\
    \nonumber
    &+ a_1^2 \underbrace{{\bf u}_L^T {\hat \Omega}_R^2 R_R(\pi+ \phi) R_L(\pi+ \phi){\bf u}_R}_{A_{200LR}}
    + 2a_1a_2 \underbrace{ {\bf u}_L^T {\hat \Omega}_R R_R(\pi+ \phi) R_L(\pi+ \phi) {\hat \Omega}_L{\bf u}_R}_{A_{110LR}}\\
    &+ a_2^2 \underbrace{{\bf u}_L^T R_R(\pi+ \phi) R_L(\pi+ \phi) {\hat \Omega}_L^2{\bf u}_R}_{A_{020LR}},
    \label{eqn:Second order eq1}
    \end{align}
    \begin{align}
    \nonumber
    &\underbrace{{\bf u}_R^T {\hat \Omega}_L^2R_R(\pi+\phi) R_L(\pi+\phi){\bf u}_L}_{B_{000RL}} = b_3\underbrace{{\bf u}_R^T R_L(\pi+\phi){\hat \Omega}_R{\bf u}_L}_{A_{001RL}} \\ 
    &+ 2a_2a_3 \underbrace{{\bf u}_R^T R_R(\pi+\phi)R_L(\pi+\phi){\hat \Omega}_L{\hat \Omega}_R{\bf u}_L}_{A_{011RL}} 
     +a_3^2 \underbrace{{\bf u}_R^T R_L(\pi+\phi){\hat \Omega}_R^2{\bf u}_L}_{A_{002RL}},
    \label{eqn:Second order eq2}
    % \\
    % {\bf u}_R^*(\text{LHS}){\bf u}_R &= {\bf u}_R^*(\text{RHS}){\bf u}_R.\label{eqn:Second order eq3}
    \end{align}
   We have already seen that 
    \begin{eqnarray}
    \label{eqn:A100LR}
    A_{100LR} = A_{010LR} = 4r^2(1-r^2)a_1\sin \phi.
    \end{eqnarray}
    In the Appendix \ref{sec:appndxlem10}, we show that 
    \begin{eqnarray}
    \label{eqn:A200LR}
    A_{200LR} &=& -4r^2(1-r^2)[1-c^2\phi + (1-2r^2)\cos \phi(1+\cos \phi))],\\
    \label{eqn:A110LR}
    A_{110LR} &=& 4r^2(1-r^2)[\cos^2\phi+(1-2r^2)(1-\cos^2 \phi)], \\
    \label{eqn:A020LR}
    A_{020LR} &=& -4r^2(1-r^2)[1-\cos^2 \phi + (1-2r^2)\cos \phi (1+\cos \phi)],\\
    \label{eqn:B000RL}
    B_{000RL} &=& 4r^2(1-r^2)(1-2r^2)(1+\cos \phi), \\
    \label{eqn:A001RL}
    A_{001RL} &=& -4r^2(1-r^2) \sin \phi, \\
    \label{eqn:A011RL}
    A_{011RL} &=& -4r^2(1-r^2)\cos \phi. \\
    \label{eqn:A002RL}
    A_{002RL} &=& 4r^2(1-r^2)(1-2r^2)(1+\cos \phi).
    \end{eqnarray}
    Noting that $a_1=-a_2$, equation \eqref{eqn:Second order eq1} together with the equations \eqref{eqn:A200LR} to \eqref{eqn:A011RL} yields:
% From the above set of equations and noting that $a_1 \ne 0$ for $0\le r \le \frac{1}{2}$, simplifying equation \eqref{eqn:Second order eq1} gives us:
\begin{eqnarray*}
(b_1+b_2) \sin \phi &=& -a_1(A_{200LR} -2 A_{110LR} + A_{020LR}), \\
&=& 2a_1(1+(1-2r^2)(1+ \cos \phi)).
\end{eqnarray*}
A similar simplification of equation \eqref{eqn:Second order eq2} with the knowledge that $a_3 = 1$ yields:
$$b_3 \sin \phi = 2a_1 \cos \phi. $$

Hence, 
$$(b_1+b_2+b_3) \sin \phi = 4a_1 (1+\cos \phi)(1-r^2). $$
Since $a_1 <0$ whenever $r \in [0, \frac{1}{2}]$ and $\phi \in (0, \pi)$, it follows that $b_1+b_2+b_3<0$; hence,  for sufficiently small $\alpha>0$,  ${\cal L}_{\alpha}{\cal R}_{\pi+\phi}{\cal L}_{\pi+\phi}$ path is not optimal, thereby completing the proof.

\end{proof}

\begin{theorem} If $0 < r \le \frac{1}{2}$, then any non-trivial  CCCC path cannot be optimal. 
\label{th:CCCC}
\end{theorem}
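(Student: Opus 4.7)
The plan is to deduce Theorem \ref{th:CCCC} directly from Lemmas \ref{subpath} and \ref{lem:nonoptimalityofCCC}, combined with the principle of optimality. I would argue by contradiction: suppose that some non-trivial $CCCC$ path is optimal. By the reflection symmetry of the kinematic equations under $u_g \mapsto -u_g$ (which interchanges the roles of $L$ and $R$ arcs), I may assume without loss of generality that the path is of type $LRLR$; the $RLRL$ case is completely analogous.

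By Lemma \ref{subpath}, the two middle arcs of an optimal $CCCC$ path have equal angular length and each exceeds $\pi$; write this common angle as $\pi + \phi$ with $\phi \in (0, \pi)$. Writing the first arc angle as $\alpha > 0$, the first three arcs form a subpath of the exact form $L_\alpha R_{\pi+\phi} L_{\pi+\phi}$ handled by Lemma \ref{lem:nonoptimalityofCCC}.

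Next, I would invoke the standard principle of optimality: any connected subpath of an optimal path must itself be length-minimizing between its two endpoint configurations. Otherwise one could splice in a strictly shorter connector between the same two intermediate configurations and obtain a shorter overall path, contradicting optimality. In particular, the prefix subpath $L_\alpha R_{\pi+\phi} L_{\pi+\phi}$ must be optimal among all paths joining its endpoint configurations. But Lemma \ref{lem:nonoptimalityofCCC} says precisely that this subpath is \emph{not} optimal whenever $r \in (0, \tfrac{1}{2}]$, $\alpha > 0$, and $\phi \in (0, \pi)$: there is a strictly shorter $RLR$ path between the same configurations. This contradiction completes the proof.

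The heavy lifting has already been done inside Lemma \ref{lem:nonoptimalityofCCC} via the regular perturbation computation of $a_1+a_2+a_3$ and $b_1+b_2+b_3$; the theorem itself is essentially bookkeeping. The only subtleties I would flag explicitly are (i) the reduction from $CCCC$ to the three-arc subpath $L_\alpha R_{\pi+\phi} L_{\pi+\phi}$ using Lemma \ref{subpath} to pin down the middle two arc angles to be equal and to lie in $(\pi,2\pi)$, and (ii) the use of the principle of optimality, which relies on the fact that a shorter replacement on the prefix preserves the intermediate configurations needed for splicing with the final arc. No new calculation is required.
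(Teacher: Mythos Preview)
Your proposal is correct and follows the same approach as the paper: the paper's proof of Theorem \ref{th:CCCC} simply says it ``follows directly from Lemmas \ref{subpath} and \ref{lem:nonoptimalityofCCC},'' and the reduction to $LRLR$ by reflection symmetry together with the principle of optimality for subpaths is exactly what the paper sketches in the paragraph preceding Lemma \ref{lem:nonoptimalityofCCC}. You have just made those steps explicit.
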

\begin{proof}
The proof follows directly from Lemmas \ref{subpath} and \ref{lem:nonoptimalityofCCC}.
\end{proof}

\section{Results and Conclusions}
In the previous section, we have provided a proof for the optimality of $CGC$ and $CCC$ paths or their subpaths when $r \le \frac{1}{2}$. A natural question arises as to what happens when $r > \frac{1}{2}$. Intuitively, as $r \rightarrow 1$, the maneuverability of the Dubins' vehicle on the unit sphere decreases. Correspondingly, a change in the configuration of the Dubins' vehicle will necessitate travelling a longer distance for attaining the desired change in configuration. In the limit, the distance will actually reach $\infty$; e.g., consider the case of a Dubins' vehicle starting at the North Pole pointing eastward. Suppose the vehicle is desired to be at the same location pointing westward. Clearly, when $r=1$ (same as the radius of the great circle), there is no possibility of accomplishing this change in configuration; no path can exist. Correspondingly, the distance becomes $\infty$. If $r<1$, it is natural to ask whether a three segment path exists? To answer this question, see the illustration shown in Fig. \ref{fig:existence}.
\begin{figure}[htpb]
\includegraphics[width=\linewidth]{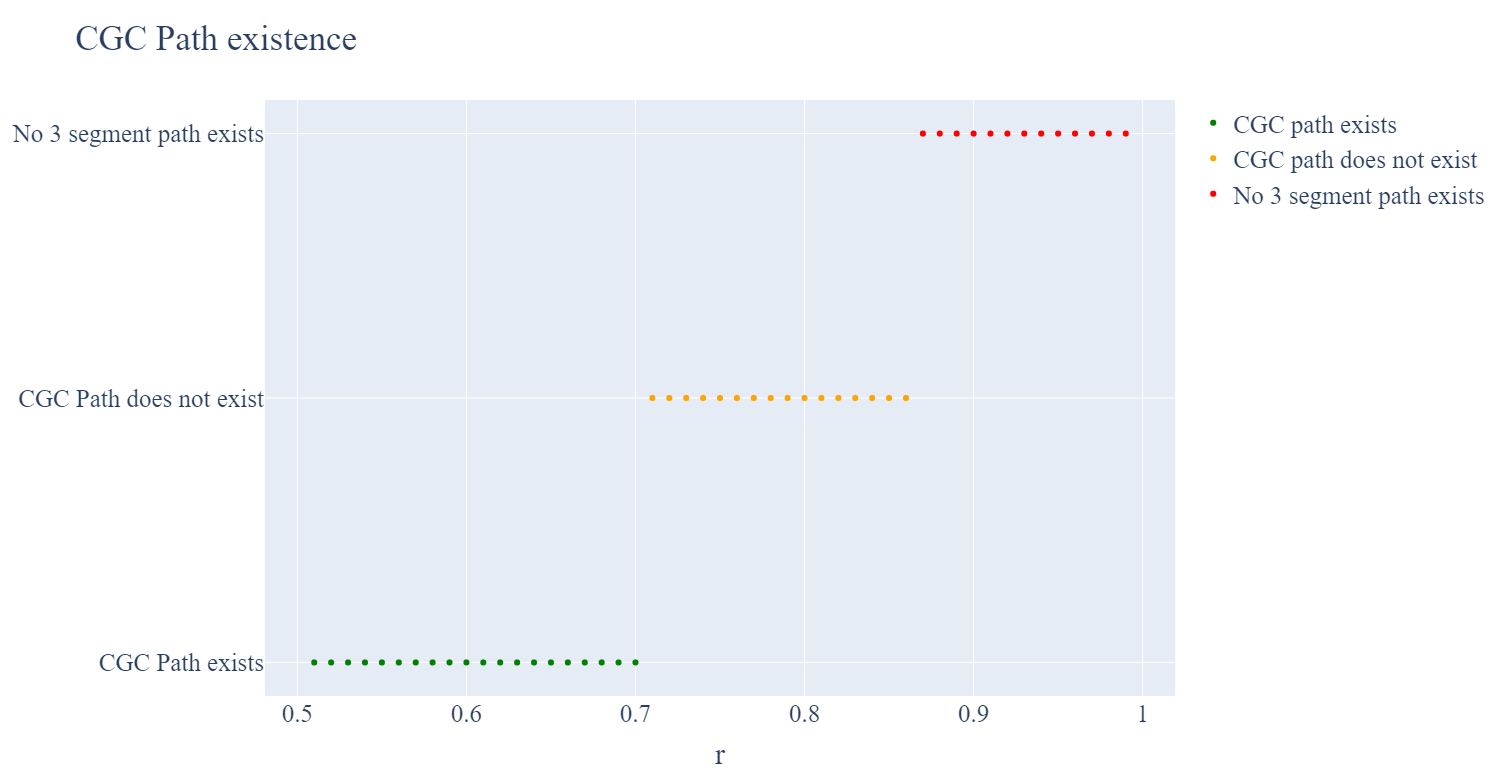}
%{existence.png}
\caption{Existence of three-segment paths connecting two configurations with the same location (North Pole), but having opposite heading}
\label{fig:existence}
\end{figure}
Computations seem to suggest that a $CGC$ type path exists only till $r \le \frac{1}{\sqrt{2}} \approx 0.707$;  furthermore no three-segment path exists beyond $r = \frac{\sqrt{3}}{2}$. A typical $CCC$ path corresponding to $r = \frac{\sqrt{3}}{2}$ is shown in the Fig. \ref{fig:teardrop}. 
\begin{figure}[htpb]
\centering
\includegraphics[scale=0.75]{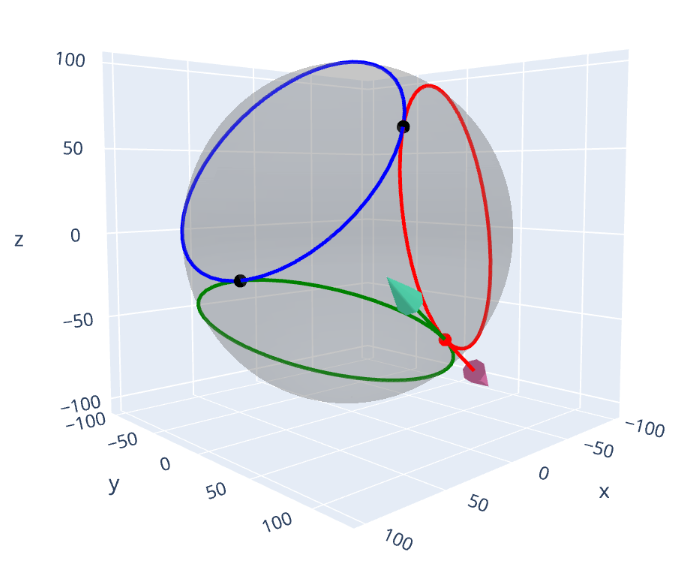}
\caption{A CCC path for Dubins vehicle for $r= \frac{\sqrt{3}}{2}$}
\label{fig:teardrop}
\end{figure}
Corresponding to the maximum possible value of $r$ at which a $CCC$ path exists, each of the circular arcs will correspond to exactly $\pi$ radians and the corresponding points of tangency will lie on a great circle, the diameters of the small circular arcs will correspond to the sides of the equilateral triangle with points of tangency as their vertices. For this reason, beyond $r= \frac{\sqrt{3}}{2}$, no $CCC$ path can exist for this pair of boundary conditions. Clearly, limitations of maneuverability manifest in terms of non-existence of a $CCC$ or a $CGC$ path. For some $p \ge 4$, there may be a $p$-segment path that connects these configurations; in this sense, the Dubins' paths on a sphere differ from those in a plane. 

There are other interesting boundary conditions that relate to how the optimal solution bifurcates - this is at the heart of the proof of Lemma \ref{lem:nonoptimalityofCCC}. In this proof, an optimal solution of the type $R_{\pi+\phi}L_{\pi+\phi}$ was perturbed to a path $L_{\alpha}R_{\pi+\phi}L_{\pi+\phi}$; in this case, we showed that there was a solution of type $R_{\phi+\psi(\alpha)} L_{\phi+\eta(\alpha)}R_{\beta(\alpha)}$ that had a smaller length when $r \le \frac{1}{2}$. In the Fig. \ref{fig:char}, $\alpha = 1^{\circ}$, $\phi \in (0, \pi)$ and $r \in (0,1)$; specifically, we considered discrete values of $\phi$ ranging from $2^{\circ}$ to $178^{\circ}$ in steps of $2^{\circ}$, and determine the optimal paths connecting the boundary configurations, $R(0) = I_3$ and $R(L) = L_{\alpha}R_{\pi+\phi}L_{\pi+\phi}$. Similarly, we considered discrete values of $r$ ranging from $0.01$ to $0.99$ in steps of $0.01$. In this plot, each dot corresponds to a specific value of $r$ and $\phi$. Each dot corresponds to an instance specified by $r$ and $\phi$. The color coding of the dot depends on whether the optimal path for the boundary conditions considered is of type $LRL$ or $RLR$ or $CGC$. The legend specifies the association of the color with the path. What can be inferred is that when $r \le \frac{1}{2}$ either $RLR$ or $CGC$ path is optimal, thereby corroborating the non-optimality of $L_{\alpha}R_{\pi+\phi}L_{\pi+\phi}$ path. 
\begin{figure}[htpb]
\centering{\includegraphics[width=\linewidth]{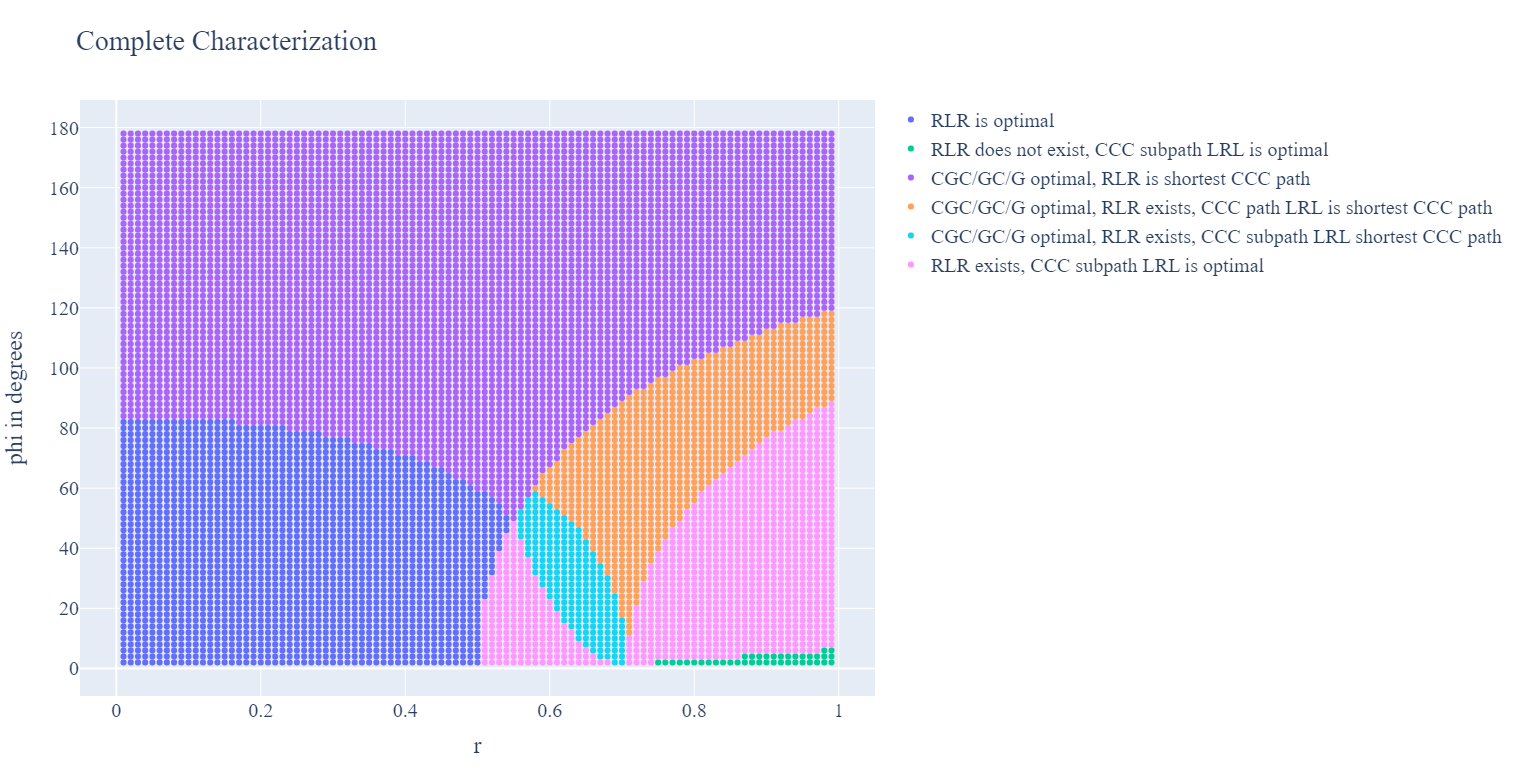}
%{Complete Characterization.png}
}
\caption{Numerical characterization of optimal path for a set of boundary conditions given by $R(0)= I_3, \quad R(L) = L_{\alpha} R_{\pi+\phi}L_{\pi+\phi}$.}
\label{fig:char}
\end{figure}

\bibliographystyle{plain}
\bibliography{references}

\section{Appendix}

\subsection{Proof of Lemma \ref{lem:arcpath}}
\label{sec:arcpath_proof}
\begin{proof}[Proof of Lemma \ref{lem:arcpath}]
\begin{itemize}
\item[(i)] This can be seen from the governing equations:
$$\frac{dX}{ds} = T(s), \quad \quad \frac{dT}{ds} = -X(s), \quad \quad \frac{dN}{ds} = 0. $$
Hence, $N(s) = N_0$ is the normal to the plane containing the great circle, and 
\begin{eqnarray}
X(s) = \begin{pmatrix} 
\cos(s) \\ - \sin(s) \\ 0
\end{pmatrix}, \quad \quad 
T(s) = \begin{pmatrix} 
-\sin(s) \\ - \cos(s) \\ 0
\end{pmatrix}.
\end{eqnarray}
\item[(ii)] Define ${\tilde X}(s) := u(s) X(s) + N(s)$. For $s \in [a,b)$, we note that  $\frac{d{\tilde X}(s)}{ds} = U \frac{dX}{ds} + \frac{dN}{ds} = 0$. Hence, ${\tilde X}(s)$ remains constant on $[a,b)$. 

Note that $<{\tilde X}(s), T(s)> = 0$; define ${\tilde N}(s) = {\tilde X}(s) \times T(s) = UN(s) - X(s)$. Then:
$$\frac{d{\tilde X}}{ds} = 0, \quad \quad \frac{dT(s)}{ds} = {\tilde N}(s), \quad \quad \frac{d{\tilde N}(s)}{ds} = -(1+U^2)T(s). $$
Clearly, then
\begin{eqnarray}
{\tilde X}(s) &=& {\tilde X}(a), \\
T(s) &=& T(a) \cos (s \sqrt{1+U^2}) + {\tilde N}(a) \sin (s \sqrt{1+U^2})\\
{\tilde N}(s) &=& \sqrt{1+U^2}[-T(a) \sin (s\sqrt{1+U^2}) + {\tilde N}(a) \cos(s \sqrt{1+U^2})].
\end{eqnarray}
Clearly, the motion is periodic and the length of the period (circumference of the smaller circle) is $\frac{2\pi}{\sqrt{1+U^2}}$. The motion of the object corresponding to $s \in [a,b)$ is a circular arc of radius $r=\frac{1}{\sqrt{1+U^2}}$ and is in the plane with a normal ${\tilde X}(a)$.
\end{itemize}
\end{proof}

\subsection{Identities needed for proof of Lemma \ref{lem:nonoptimalityofCCC}} \label{sec:appndxlem10}
We will encounter the following quantities frequently and hence, we list them for easy reference:
\begin{eqnarray}
{\hat \Omega}_L = \begin{pmatrix} 0&k_z& 0 \\ -k_z & 0 & k_x \\ 0& -k_x & 0 \end{pmatrix} =\begin{pmatrix} 0& -r& 0\\
                           r& 0& -\sqrt{1-r^2}\\
                           0& \sqrt{1-r^2}& 0\end{pmatrix} ;
    \\
   {\hat  \Omega}_R = \begin{pmatrix} 0&k_z& 0 \\ -k_z & 0 & -k_x \\ 0& k_x & 0 \end{pmatrix} = \begin{pmatrix} 0& -r& 0\\
                           r& 0& \sqrt{1-r^2}\\
                           0& -\sqrt{1-r^2}& 0
    \end{pmatrix}.
\end{eqnarray}

As we have seen before, the axial vectors of ${\hat \Omega}_L$ and ${\hat \Omega}_R$ are respectively given by:
\begin{equation}
{\bf u}_L=\begin{pmatrix} \sqrt{1-r^2}\\0\\r\end{pmatrix}; \quad
{\bf u}_R=\begin{pmatrix} -\sqrt{1-r^2}\\0\\r\end{pmatrix}.
\end{equation}
Furthermore, the corresponding rotation matrices are:
\begin{eqnarray*}
R_L(\pi+\phi) &=& I-{\hat\Omega}_L\sin\phi+ {\hat \Omega}_L^2(1+\cos\phi),\\
R_R(\pi+\phi) &=& I- {\hat\Omega}_R\sin\phi+{\hat \Omega}_R^2(1+\cos\phi).
\end{eqnarray*}
Let
\begin{equation}
e_2 = \begin{pmatrix}0\\ 1\\ 0\end{pmatrix}\quad\quad
\implies e_2^T = \begin{pmatrix}0& 1& 0\end{pmatrix}.
\end{equation}
The following relationships will be useful in solving for $a_1, a_2, a_3, b_1, b_2$ and $b_3$:
\begin{equation}
{\hat \Omega}_L{\bf u}_R=-2r\sqrt{1-r^2}e_2,
\quad\quad
{\hat \Omega}_R{\bf u}_L=2r\sqrt{1-r^2}e_2.
\nonumber
\end{equation}

\begin{equation}
\Omega_L{\bf u}_L=\Omega_R{\bf u}_R= 0 \quad  \implies {\bf u}_L^T\Omega_L={\bf u}_R^T\Omega_R=0
\nonumber
\end{equation}
\begin{equation}
\begin{split}
{\bf u}_R^TR_R(\pi+\phi)={\bf u}_R^T,\quad\quad {\bf u}_L^TR_L(\pi+\phi)={\bf u}_L^T, \\
R_R(\pi+\phi){\bf u}_R={\bf u}_R,\quad\quad R_L(\pi+\phi){\bf u}_L={\bf u}_L.
\nonumber  
\end{split}
\end{equation}

\noindent{\bf Claim 1:}
\begin{itemize}
    \item If ${\hat \Omega} \in \{{\hat \Omega}_L, {\hat \Omega}_R\}$, then $e_2^T{\hat \Omega}^2e_2 = -1$.
    \item $e_2^T{\hat \Omega}_R{\hat \Omega}_Le_2 = 1-2r^2 = e_2^T{\hat \Omega}_L{\hat \Omega}_Re_2$.
    \item $e_2^T{\hat \Omega}_R^2{\hat \Omega}_L e_2 = 0$.
\end{itemize}
\begin{proof}
\begin{align*}
& {\hat \Omega} e_2 = \begin{pmatrix} k_z \\ 0 \\ \pm k_x \end{pmatrix} \implies e_2^T {\hat \Omega}^T{\hat \Omega}e_2 = 1 \iff e_2^T{\hat \Omega}^2 e_2 = -1.  \\
& {\hat \Omega}_L e_2 = \begin{pmatrix} k_z \\ 0 \\  k_x \end{pmatrix}, \quad {\hat \Omega}_L e_2 = \begin{pmatrix} k_z \\ 0 \\ -k_x \end{pmatrix} \\
& \implies  e_2^T{\hat \Omega}_R^T{\hat \Omega}_:e_2 = k_z^2-k_x^2 = 2r^2-1 \iff e_2^T{\hat \Omega}_R{\hat \Omega}_Le_2 = 1-2r^2.\\
& {\hat \Omega}_R{\hat \Omega}_L e_2 = \begin{pmatrix} 0&k_z& 0 \\ -k_z& 0 &k_x \\ 0&-k_x&0
\end{pmatrix}\begin{pmatrix} k_z \\ 0 \\ -k_x
\end{pmatrix} = -e_2  \\
&\iff e_2{\hat \Omega}_R^2{\hat \Omega}_Le_2 = -e_2^T{\Omega}_Re_2 = 0.
\end{align*}
It is easy to see that $e_2^T{\hat \Omega}_L{\hat \Omega}_R e_2 = 1-2r^2$ and 
$e_2^T{\hat \Omega}_L^2{\hat \Omega}_Re_2 = 0$.
\end{proof}

\noindent{\bf Claim 2:} 
\begin{enumerate}
    \item $A_{100LR} = {\mathbf u}_L^T{\hat \Omega}_R R_R(\pi + \phi) R_L(\pi + \phi) {\mathbf u}_R =4r^2(1-r^2)a_1\sin \phi$.
    \item $ A_{010LR} = {\mathbf u}_L^T R_R(\pi + \phi) R_L(\pi + \phi){\hat \Omega}_L {\mathbf u}_R=4r^2(1-r^2)a_1\sin \phi.$
\end{enumerate}
\begin{proof}
Recognizing that $${\hat \Omega}_R R_R(\pi + \phi) = \frac{dR_R(\pi+ \phi)}{d \phi},$$
we obtain using Euler-Rodriguez formula that
\begin{align*}
&A_{100LR}= {\mathbf u}_L^T[-{\hat \Omega}_R^T \cos \phi - {\hat \Omega}_R^2 \sin \phi][I-{\hat \Omega}_L \sin \phi + {\hat \Omega}_L^2 (1+\cos \phi)] {\mathbf u}_R, \\
&= 2k_xk_z [e_2^T\cos \phi + e_2^T {\hat \Omega}_R\sin \phi][{\mathbf u}_R +2k_xk_z (e_2 \sin \phi - {\hat \Omega}_Le_2 (1+ \cos \phi) ].
\end{align*}
Since $e_2$ is perpendicular to both ${\mathbf u}_R$ and ${\mathbf u}_L$, and ${\hat \Omega}_R{\mathbf u}_R = 0$, 
we get 
\begin{align*}
    A_{100LR} &= 4k_x^2k_z^2(\cos \phi \sin \phi- e_2^T{\hat \Omega}_R {\hat \Omega}_Le_2 \sin \phi (1+ \cos \phi) \\
    &= 4r^2(1-r^2)\sin \phi [ \cos \phi + (2r^2-1) (1+ \cos \phi)].
\end{align*}

Using a similar approach:
\begin{eqnarray*}
A_{010LR} &=& {\mathbf u}_L^T R_R(\pi + \phi) R_L(\pi + \phi){\hat \Omega}_L {\mathbf u}_R \\
&=& {\mathbf u}_L^T [I- {\hat \Omega}_R \sin \phi + {\hat \Omega}_R^2 (1+ \cos \phi)] [-{\hat \Omega}_L \cos \phi - {\hat \Omega}_L^2 \sin \phi] {\mathbf u}_R\\
&=& [{\mathbf  u}_L^T - {\mathbf u}_L^T {\hat \Omega}_L \sin \phi + {\mathbf u}_L^T {\hat \Omega}_L^2 (1+ \cos \phi)][-{\hat \Omega}_R {\mathbf u}_L \cos \phi -{\hat \Omega}_R^2 {\mathbf u}_L \sin \phi] \\
&=&[ - {\mathbf u}_L^T {\hat \Omega}_R \sin \phi + {\mathbf u}_L^T {\hat \Omega}_R^2 (1+ \cos \phi)][-{\hat \Omega}_L {\mathbf u}_R \cos \phi -{\hat \Omega}_L^2 {\mathbf u}_R \sin \phi] \\
&=& (2k_xk_z)[e_2^T \sin \phi -e_2^T {\hat \Omega}_R(1+\cos \phi)] (2k_xk_z) [e_2 \cos \phi + {\hat \Omega}_L e_2] \\
&=& 4k_x^2k_z^2[\cos \phi \sin \phi +(2r^2-1) \sin \phi (1+ \cos \phi)], \\
&=& 4r^2(1-r^2)\sin \phi [ \cos \phi + (2r^2-1) (1+ \cos \phi)].
\end{eqnarray*}
\end{proof}
\noindent{\bf Claim 3:} 
\begin{enumerate}
    \item $A_{000RL} := {\mathbf u}_R^T{\hat \Omega}_L R_R(\pi + \phi) {\mathbf u}_L = -4r^2(1-r^2) \sin \phi.$
    \item $A_{001RL} := {\mathbf u}_R^TR_L(\pi + \phi) {\hat \Omega}_R{\mathbf u}_L = -4r^2(1-r^2) \sin \phi.$
\end{enumerate}
\begin{proof}
    \begin{align*}
    A_{000RL}&=[{\mathbf u}_R^T{\hat \Omega}_L][R_R(\pi + \phi) {\mathbf u}_L] \\
    &=2k_xk_z e_2^T[{\mathbf u}_L - {\hat \Omega}_R{\mathbf u}_L\sin \phi + {\hat \Omega}_R^2{\mathbf u}_L (1+ \cos \phi)]\\
    &= 2k_xk_ze_2^T[-2k_xk_z (e_2 \sin \phi - {\hat \Omega}_R e_2 (1+ \cos \phi)] \\
    &= -4k_x^2k_z^2 \sin \phi = -4r^2(1-r^2) \sin \phi.
    \end{align*}
    Similarly,
    \begin{align*}
    A_{001RL} &= [{\mathbf u}_R^TR_L(\pi + \phi)][{\hat \Omega}_R{\mathbf u}_L] \\
    &= 2k_xk_z[{\mathbf u}_R^T - {\mathbf u}_R^T{\hat \Omega}_L \sin\phi + {\mathbf u}_R^T{\hat \Omega}_L^2(1+\cos \phi)]e_2\\
    &= 2k_xk_z[- {\mathbf u}_R^T{\hat \Omega}_L \sin\phi + {\mathbf u}_R^T{\hat \Omega}_L^2(1+\cos \phi)]e_2\\
    &= -4k_x^2k_z^2[e_2^T\sin \phi-e_2^T {\hat \Omega}_L(1+ \cos \phi)]e_2\\
    &= -4k_x^2k_z^2 \sin \phi = -4r^2(1-r^2) \sin \phi.
    \end{align*}
\end{proof}
\noindent{\bf Claim 4:} 
\begin{enumerate}
    \item $A_{000RR}:= {\mathbf u}_R^T{\hat \Omega}_LR_R(\pi+\phi)R_L(\pi+\phi){\mathbf u}_R = 4r^2(1-r^2)\sin \phi[1-2r^2(1+\cos\phi)].$
    \item $A_{010RR} := {\mathbf u}_R^TR_L(\pi + \phi) {\hat \Omega}_L(\pi + \phi){\mathbf u}_R = 4r^2(1-r^2) \sin \phi.$
\end{enumerate}
\begin{proof}
    \begin{align*}
    &A_{000RR}= [{\mathbf u}_R^T{\hat \Omega}_L]R_R(\pi+\phi)][]R_L(\pi+\phi){\mathbf u}_R]\\ 
    =& 2k_xk_z e_2^T[I-{\hat \Omega}_R \sin \phi + {\hat \Omega}_R^2 (1+\cos \phi)][{\mathbf u}_R -{\hat \Omega}_L {\mathbf u}_R\sin \phi + {\hat \Omega}_L^2{\mathbf u}_R (1+\cos \phi)] \\
    =& 2k_xk_z[e_2^T - e_2^T{\hat \Omega}_R\sin \phi + e_2^T{\hat \Omega}_R^2 (1+\cos \phi)][-{\hat \Omega}_L {\mathbf u}_R\sin \phi + {\hat \Omega}_L^2{\mathbf u}_R (1+\cos \phi)]\\
    =& 2k_xk_z[e_2^T - e_2^T{\hat \Omega}_R\sin \phi + e_2^T{\hat \Omega}_R^2 (1+\cos \phi)] (2k_xk_z)[e_2\sin \phi - {\hat \Omega}_Le_2 (1+\cos \phi)]\\
    =& 4k_x^2k_z^2[ \sin \phi + \underbrace{e_2^T{\hat \Omega}_R{\hat \Omega}_Le_2}_{1-2r^2} \sin \phi (1+\cos \phi)+ \underbrace{e_2^T{\hat \Omega}_R^2e_2}_{-1} \sin \phi (1+ \cos \phi) \\ 
    &- \underbrace{e_2^T{\hat \Omega}_R^2{\hat \Omega}_Le_2}_{0}(1+\cos \phi)^2]\\
    =& 4k_x^2k_z^2 \sin \phi[ 1+ (1-2r^2) (1+ \cos \phi)-(1+ \cos \phi)]\\
    =& 4r^2(1-r^2)\sin \phi[1-2r^2(1+\cos\phi)].
    \end{align*}
    Similarly,
    \begin{align*}
    A_{010RR} &= {\mathbf u}_R^T[[R_L(\pi + \phi) {\hat \Omega}_L(\pi + \phi)] {\mathbf u}_R] = 
    {\mathbf u}_R^T[-{\hat \Omega}_L {\mathbf u}_R \cos \phi - {\hat \Omega}_L^2 {\mathbf u}_R\sin \phi] \\
    &= -{\mathbf u}_R^T {\hat \Omega}_L^2 {\mathbf u}_R \sin \phi = 4r^2(1-r^2) \sin \phi
    \end{align*}
\end{proof}
    % \begin{eqnarray}
    % \label{eqn:A100LR}
    % A_{100LR} = {\mathbf u}_L^T{\hat \mathbf \Omega}_R R_R(\pi + \phi) R_L(\pi + \phi) {\mathbf u}_R =4r^2(1-r^2)a_1\sin \phi, \\
    % A_{010LR} = {\mathbf u}_L^T R_R(\pi + \phi) R_L(\pi + \phi){\hat \mathbf \Omega}_L {\mathbf u}_R=4r^2(1-r^2)a_1\sin \phi.
    % \end{eqnarray}
\noindent{\bf Claim 5:}
\begin{enumerate}
    \item $A_{200LR} := {\mathbf u}_L^T{\hat \Omega}_R^2 R_R(\pi+ \phi) R_L(\pi+\phi){\mathbf u}_R = -4r^2(1-r^2)[1-\cos^2 \phi + (1-2r^2) \cos \phi (1+ \cos \phi)].$
     \item $A_{110LR} := {\mathbf u}_L^T{\hat \Omega}_R R_R(\pi+ \phi) R_L(\pi+\phi){\hat \Omega}_L {\mathbf u}_R = 4r^2(1-r^2)[\cos^2 \phi + (1-2r^2) (1- \cos^2 \phi)].$
     \item $A_{020LR} := {\mathbf u}_L^T R_R(\pi+ \phi) R_L(\pi+\phi){\hat \Omega}_L^2 {\mathbf u}_R = -4r^2(1-r^2)[1-\cos^2 \phi + (1-2r^2) \cos \phi (1+ \cos \phi)].$
\end{enumerate}
\begin{proof}
\begin{eqnarray*}
A_{200LR} &:=& [{\mathbf u}_L^T[{\hat \Omega}_R^2 R_R(\pi+ \phi)]][ R_L(\pi+\phi){\mathbf u}_R ] \\
&=& [{\mathbf u}_L^T {\hat \Omega}_R \sin \phi - {\mathbf u}_L^T {\hat \Omega}_R^2 \cos \phi ][{\mathbf u}_R - {\hat \Omega}_L {\mathbf u}_R \sin \phi +
{\hat \Omega}_L^2 {\mathbf u}_R (1+\cos \phi)]\\
&=&2k_xk_z[-e_2^T \sin \phi +e_2^T{\hat \Omega}_R \cos \phi][{\mathbf u}_R +2k_xk_z[e_2 \sin \phi - {\hat \Omega}_Le_2(1+ \cos \phi)]]\\
&=& (2k_xk_z)^2[-e_2^T \sin \phi +e_2^T{\hat \Omega}_R \cos \phi][e_2 \sin \phi - {\hat \Omega}_Le_2(1+ \cos \phi)]\\
&=& -4k_x^2k_z^2[\sin^2 \phi + e_2^T{\hat \Omega}_R{\hat \Omega}_L e_2\cos \phi (1+ \cos \phi)] \\ 
&=& -4r^2(1-r^2)[1-\cos^2 \phi + (1-2r^2) \cos \phi (1+ \cos \phi)].
\end{eqnarray*}
Similarly,
\begin{eqnarray*}
A_{110LR} &:=& [{\mathbf u}_L^T[{\hat \Omega}_R R_R(\pi+ \phi)]] [[R_L(\pi+\phi){\hat \Omega}_L] {\mathbf u}_R] \\
&=& [-{\mathbf u}_L^T {\hat \Omega}_R \cos \phi -{\mathbf u}_L^T {\hat \Omega}_R^2 \sin \phi ][- {\hat \Omega}_L{\mathbf u}_R \cos \phi - 
{\hat \Omega}_L^2{\mathbf u}_R \sin \phi]\\
&=& (2k_xk_z)^2[e_2^T \cos \phi  + e_2^T {\hat \Omega}_R \sin \phi][e_2 \cos \phi + {\hat \Omega}_Le_2 \sin \phi] \\
&=& 4k_x^2k_z^2[ \cos^2 \phi + e_2^T{\hat \Omega}_R{\hat \Omega}_L e_2 \sin^2 \phi] \\
&=& 4r^2(1-r^2)[\cos^2 \phi + (1-2r^2) (1- \cos^2 \phi)].
\end{eqnarray*}
\begin{eqnarray*}
A_{020LR} &:=& [{\mathbf u}_L^T R_R(\pi+ \phi)] [[R_L(\pi+\phi){\hat \Omega}_L^2] {\mathbf u}_R] \\
&=&  [{\mathbf u}_L^T - {\mathbf u}_L^T {\hat \Omega}_R \sin \phi + {\mathbf u}_L^T {\hat \Omega}_R^2 (1+ \cos \phi)] [{\hat \Omega}_L {\mathbf u}_R\sin \phi - {\hat \Omega}_L^2 {\mathbf u}_R\cos \phi]\\
&=&  (2k_xk_z)^2[e_2^T \sin \phi -e_2^T{\hat \Omega}_R (1+ \cos \phi)] [-e_2\sin \phi + {\hat \Omega}_Le_2 \cos \phi]\\
&=& 4k_x^2k_z^2[ - \sin^2 \phi -e_2^T{\hat \Omega}_R{\hat \Omega}_L e_2 \cos \phi (1+ \cos \phi)] \\
&=& -4r^2(1-r^2)[1-\cos^2 \phi + (1-2r^2) \cos \phi (1+ \cos \phi)]
\end{eqnarray*}
\end{proof}
\noindent{\bf Claim 6:}
\begin{enumerate}
    \item $B_{000RL} := {\mathbf u}_R^T{\hat \Omega}_L^2R_R(\pi + \phi)R_L(\pi + \phi){\mathbf u}_L = 4r^2(1-r^2)(1-2r^2)(1+\cos \phi)$.
    \item $A_{011RL} := {\mathbf u}_R^TR_R(\pi + \phi)R_L(\pi+\phi){\hat \Omega}_L{\hat \Omega}_R{\mathbf u}_L = -4r^2(1-r^2) \cos \phi$.
    \item $A_{002RL} := {\mathbf u}_R^T R_L(\pi + \phi){\hat \Omega}_R^2{\mathbf u}_L = 4r^2(1-r^2)(1-2r^2)(1+ \cos \phi).$
\end{enumerate}
\begin{proof}
\begin{eqnarray*}
B_{000RL} &:=& [{\mathbf u}_R^T{\hat \Omega}_L^2]R_R(\pi + \phi)[R_L(\pi + \phi){\mathbf u}_L] \\
&=& [{\mathbf u}_R^T{\hat \Omega}_L^2][R_R(\pi + \phi){\mathbf u}_L] \\
&=& 2k_xk_z [e_2^T{\hat \Omega}_L][{\mathbf u}_L - {\hat \Omega}_R {\mathbf u}_L \sin \phi + 
{\hat \Omega}_R^2 {\mathbf u}_L (1+\cos \phi)]\\
&=& 2k_xk_z e_2^T{\hat \Omega}_L {\hat \Omega}_R^2 {\mathbf u}_L (1+ \cos \phi) \\ 
&=& 2k_xk_z e_2^T{\hat \Omega}_L (2k_x k_z {\hat \Omega}_Re_2 (1+ \cos \phi)) \\
&=& 4k_x^2k_z^2 e_2^T{\hat \Omega}_L{\hat \Omega}_Re_2 (1+\cos \phi) \\
&=& 4r^2(1-r^2)(1-2r^2)(1+\cos \phi).
\end{eqnarray*}
\begin{eqnarray*}
A_{011RL} &:=& [{\mathbf u}_R^TR_R(\pi + \phi)][R_L(\pi+\phi){\hat \Omega}_L][{\hat \Omega}_R{\mathbf u}_L] \\
&=& [{\mathbf u}_R^T[R_L(\pi+\phi){\hat \Omega}_L]][{\hat \Omega}_R{\mathbf u}_L] \\
&=& [ -{\mathbf u}_R^T {\hat \Omega}_L \cos \phi  -{\mathbf u}_R^T {\hat \Omega}_L^2 \sin \phi] (2k_xk_z e_2)\\
&=&2k_xk_z[-2k_xk_z e_2^T \cos \phi -2k_xk_ze_2^T {\hat \Omega}_L] e_2\\
&=& -4k_x^2k_z^2 \cos \phi =  -4r^2(1-r^2) \cos \phi
\end{eqnarray*}
Finally, 
\begin{eqnarray*}
A_{002RL} &:=& [{\mathbf u}_R^T R_L(\pi + \phi)][{\hat \Omega}_R^2{\mathbf u}_L] \\
&=& [{\mathbf u}_R^T - {\mathbf u}_R^T {\hat \Omega}_L \sin \phi + {\mathbf u}_R^T {\hat \Omega}_L^2 (1+\cos \phi)]{\hat \Omega}_R [2k_xk_z e_2]
\\
&=& 2k_xk_z [-2k_xk_z e_2^T \sin \phi +2k_xk_z e_2^T {\hat \Omega}_L (1+ \cos \phi)]{\hat \Omega}_Re_2 \\
&=& 4k_x^2k_z^2(1-2r^2) \sin \phi (1+ \cos \phi) \\
&=& 4r^2(1-r^2)(1-2r^2)(1+ \cos \phi).
\end{eqnarray*}
\end{proof}
\end{document}